\theoremstyle{plain}
\newtheorem{thm}{Theorem}[section]
\newtheorem{cor}[thm]{Corollary}
\newtheorem{lem}[thm]{Lemma}
\newtheorem{prop}[thm]{Proposition}
\newtheorem{conj}[thm]{Conjecture}
\newtheorem{question}[thm]{Question}
\theoremstyle{definition}
\theoremstyle{remark}
\newtheorem{rmk}[thm]{Remark}
\newcommand{\BC}{{\mathbb{C}}}
\newcommand{\BD}{{\mathbb{D}}}
\newcommand{\BH}{{\mathbb{H}}}
\newcommand{\BP}{{\mathbb{P}}}
\newcommand{\BQ}{{\mathbb{Q}}}
\newcommand{\BR}{{\mathbb{R}}}
\newcommand{\BZ}{{\mathbb{Z}}}
\newcommand{\CC}{{\mathcal C}}
\newcommand{\CE}{{\mathcal E}}
\newcommand{\CF}{{\mathcal F}}
\newcommand{\CH}{{\mathcal H}}
\newcommand{\CM}{{\mathcal M}}
\newcommand{\CP}{{\mathcal P}}
\DeclareFontFamily{OT1}{rsfs}{}
\DeclareFontShape{OT1}{rsfs}{n}{it}{<-> rsfs10}{}
\DeclareMathAlphabet{\curly}{OT1}{rsfs}{n}{it}
\let\@wraptoccontribs\wraptoccontribs
\begin{document}
\title[Topology of Lagrangian fibrations and Hodge theory]{Topology of Lagrangian fibrations and Hodge theory of hyper-K\"ahler manifolds}
\date{\today}

\author{Junliang Shen}
\address{Massachusetts Institute of Technology, Department of Mathematics, Simons Building, 77 Massachusetts Avenue, Cambridge, MA 02139, USA}
\email{jlshen@mit.edu}

\author{Qizheng Yin}
\address{Peking University, Beijing International Center for Mathematical Research, Jingchunyuan Courtyard \#78, 5 Yiheyuan Road, Haidian District, Beijing 100871, China}
\email{qizheng@math.pku.edu.cn}

\contrib[with an appendix by]{Claire Voisin}
\address{Coll\`ege de France, 3 rue d'Ulm, 75005 Paris, France}
\email{claire.voisin@imj-prg.fr}

\begin{abstract}
We establish a compact analogue of the $P = W$ conjecture. For a projective irreducible holomorphic symplectic variety with a Lagrangian fibration, we show that the perverse numbers associated with the fibration match perfectly with the Hodge numbers of the total space. This builds a new connection between the topology of Lagrangian fibrations and the Hodge theory of hyper-K\"ahler manifolds. We present two applications of our result, one on the cohomology of the base and fibers of a Lagrangian fibration, the other on the refined Gopakumar--Vafa invariants of a $K3$ surface. Furthermore, we show that the perverse filtration associated with a Lagrangian fibration is multiplicative under cup product.
\end{abstract}

\maketitle

\baselineskip=14.5pt

\setcounter{tocdepth}{1} 

\tableofcontents
\setcounter{section}{-1}

\section{Introduction}

\subsection{Perverse filtrations} Throughout the paper, we work over the complex numbers $\BC$.

Let $\pi: X \rightarrow Y$ be a proper morphism with $X$ a nonsingular algebraic variety. The perverse $t$-structure on the constructible derived category $D_c^b(Y)$ induces an increasing filtration on the cohomology $H^*(X, \BQ)$,
\begin{equation} \label{Perv_Filtration}
    P_0H^\ast(X, \BQ) \subset P_1H^\ast(X, \BQ) \subset \dots \subset P_kH^\ast(X, \BQ) \subset \dots \subset H^\ast(X, \BQ),
\end{equation}
called the \emph{perverse filtration} associated with $\pi$. See Section \ref{sec1} for a brief review of the subject.

%Here
%\begin{equation*}
%r = \dim X \times_Y X - \dim X
%\end{equation*}
%is the defect of semismallness.
%We refer to \cite{BBD, dCM0, dCM1} for the general theory of perverse $t$-structures, perverse filtrations, and the decomposition theorem of algebraic maps.

The filtration (\ref{Perv_Filtration}) is governed by the topology of the map $\pi: X\rightarrow Y$. Some important invariants are the \emph{perverse numbers}
\[
^\mathfrak{p}h^{i,j}(X)= \dim \mathrm{Gr}^P_i H^{i+j}(X, \BQ)= \dim \left( P_i H^{i+j}(X, \BQ)/ P_{i-1}  H^{i+j}(X, \BQ) \right).
\]
%which can be nonzero only if
%\[
% 0\leq i \leq 2r, \quad j\geq 0, \quad   i+j\leq 2\dim Y.
%\]

The purpose of this paper is to study the perverse filtrations and perverse numbers of Lagrangian fibrations associated with projective irreducible holomorphic symplectic varieties.

%\footnote{We call that $M$ is a simply connected nonsingular projective variety with~$H^0(M, \Omega_M^2)$ spanned by a nowhere degenerate holomorphic $2$-form $\sigma$. In particular, we always assume~$M$ to be compact.}

\subsection{Integrable systems} \label{originp=w}
Perverse filtrations emerge in the study of integrable systems. An interesting example is given by the Hitchin system
\begin{equation}\label{Hitchin_fib}
h: \CM_{\mathrm{Dol}} \rightarrow \mathbb{C}^N
\end{equation}
associated with the moduli space $\CM_{\mathrm{Dol}}$ of semistable Higgs bundles on a nonsingular curve $C$. The morphism (\ref{Hitchin_fib}), called the \emph{Hitchin fibration}, is Lagrangian with respect to the canonical holomorphic symplectic form on~$\CM_{\mathrm{Dol}}$ given by the hyper-K\"ahler metric on $\CM_{\mathrm{Dol}}$.

The topology of the Hitchin fibration has been studied intensively over the past few decades. Among other things, a striking phenomenon was discovered by de~Cataldo, Hausel, and Migliorini \cite{dCHM1}. It predicts that the perverse filtration of $\CM_{\mathrm{Dol}}$ matches the weight filtration of the mixed Hodge structure on~the corresponding character variety $\mathcal{M}_B$ via Simpson's nonabelian Hodge theory.

More precisely, Simpson constructed in \cite{Simp} a canonical diffeomorphism between the moduli space $\CM_{\mathrm{Dol}}$ of rank $n$ Higgs bundles and the corresponding character variety $\CM_B$ of rank $n$ local systems.\footnote{Following \cite{dCHM1}, we shall only consider Higgs bundles of degree $1$. The corresponding local systems are on $C \backslash \{\textrm{pt}\}$ with monodromy $e^{\frac{2\pi \sqrt{-1}}{n}}$ around the point. This ensures that no strictly semistable Higgs bundles appear and all local systems are irreducible. In particular, the corresponding moduli spaces $\CM_{\mathrm{Dol}}$ and $\CM_B$ are nonsingular.} The induced canonical isomorphism 
\[
H^\ast(\CM_{\mathrm{Dol}}, \BQ) = H^\ast(\CM_B, \BQ) 
\]
is then \emph{expected} to identify the perverse filtration of $\CM_{\mathrm{Dol}}$ and the weight filtration of $\CM_B$,
\[
P_kH^\ast(\CM_{\mathrm{Dol}}, \BQ) = W_{2k}H^\ast(\CM_{B}, \BQ)= W_{2k+1}H^\ast(\CM_{B}, \BQ), \quad k \geq 0.
\]
Such a phenomenon is now referred to as the \emph{$P=W$ conjecture}. See \cite{dCHM1, dCHM3, SZ, Z} for more details and recent progress in this direction.

By \cite{Shende}, all cohomology classes in $H^*(\CM_B, \BQ)$ are of Hodge--Tate type. In fact, we have
\[
H^*(\CM_B, \BQ) = \bigoplus_{k,d} {^k\mathrm{Hdg}^d(\CM_B)}
\]
with 
\[
^k\mathrm{Hdg}^d(\CM_B) =  W_{2k}H^d(\CM_B, \BQ) \cap F^k H^d(\CM_B, \BC) \cap \overline{F}^k H^d(\CM_B, \BC).
\]
Here $F^*$ is the Hodge filtration and $\overline{F}^*$ is the complex conjugate. Hence, as a consequence of the $P=W$ conjecture, the perverse numbers associated with the Hitchin fibration (\ref{Hitchin_fib}) should equal the Hodge numbers of the corresponding character variety
\[
h^{i,j}(\CM_B) = \mathrm{dim} \left( ^i\mathrm{Hdg}^{i+j}(\CM_B) \right).
\]

\begin{conj}[\cite{dCHM1} Numerical $P=W$] \label{P=W}
We have
\begin{equation*}
^\mathfrak{p}h^{i,j}(\CM_{\mathrm{Dol}}) = h^{i,j}(\CM_B).
\end{equation*}
\end{conj}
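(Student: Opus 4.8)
The plan is to deduce the numerical identity from a term-by-term comparison of the two filtrations on an explicit set of ring generators, exploiting that both filtrations are multiplicative. On the Betti side the weight filtration is automatically multiplicative, and by \cite{Shende} every class in $H^*(\CM_B, \BQ)$ is of Hodge--Tate type, so the Hodge numbers $h^{i,j}(\CM_B)$ are determined by the behaviour of $W$ on any generating set. On the Dolbeault side, $H^*(\CM_{\mathrm{Dol}}, \BQ)$ is, in the twisted coprime situation of the footnote, generated as a $\BQ$-algebra by the K\"unneth components of the Chern classes of a universal Higgs bundle (the Higgs analog of the Atiyah--Bott generators), and Simpson's diffeomorphism carries these to the corresponding tautological classes on $\CM_B$. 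So the first step is to establish that the perverse filtration associated with the Hitchin map $h$ is multiplicative under cup product. This is the exact analog for $h$ of the multiplicativity statement established in this paper for compact Lagrangian fibrations, and I would try to rerun that argument, substituting relative hard Lefschetz for $h$ --- legitimate since $h$ is proper with general fiber an abelian variety --- and affineness of the Hitchin base for the role played by Poincar\'e duality of a compact hyper-K\"ahler manifold.

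The second step is to compute, generator by generator, the perverse degree on the Dolbeault side and the weight on the Betti side, and to check that they match. Since $h$ is proper and $\CM_{\mathrm{Dol}}$ is nonsingular, the decomposition theorem and Ng\^{o}'s support theorem apply, so the perverse degree of a tautological class can be read off from its restriction to a smooth Hitchin fiber, which is an abelian variety; in cohomological degree $2$ one expects the graded pieces $\mathrm{Gr}^P_0$, $\mathrm{Gr}^P_1$, $\mathrm{Gr}^P_2$ of $H^2(\CM_{\mathrm{Dol}})$ to match $\mathrm{Gr}^W_0$, $\mathrm{Gr}^W_2$, $\mathrm{Gr}^W_4$ of $H^2(\CM_B)$ under the collapse $P_k = W_{2k} = W_{2k+1}$. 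Granting the first two steps, the conclusion is then formal: two multiplicative filtrations on a finitely generated graded ring that coincide on a set of algebra generators under the indexing $P_k \leftrightarrow W_{2k} = W_{2k+1}$ must coincide everywhere, so $^\mathfrak{p}h^{i,j}(\CM_{\mathrm{Dol}}) = h^{i,j}(\CM_B)$.

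A more structural alternative would imitate the Looijenga--Lunts--Verbitsky mechanism underlying the compact case: one would want to realize the ``perverse $\mathfrak{sl}_2$'' coming from relative hard Lefschetz for $h$ and the ``weight $\mathfrak{sl}_2$'' coming from curious hard Lefschetz on $\CM_B$ as conjugate subalgebras of a single Lie algebra $\Fg$ acting on $H^*(\CM_{\mathrm{Dol}}, \BQ)$, which would force the two associated weight decompositions to have equal dimensions. The obstruction is that $\CM_{\mathrm{Dol}}$ is non-compact, so there is no K\"ahler class to supply ordinary hard Lefschetz as a building block; the analog of the LLV algebra would have to be manufactured out of the Betti mixed Hodge structure, and controlling it appears to be at least as hard as $P=W$ itself.

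The genuine obstacle in either route is the second step --- in essence $P=W$ in cohomological degree $2$, together with enough control to propagate it to products. Even with tautological generation in hand, pinning down the perverse degree of the higher-degree generators (for instance the class commonly denoted $\beta$) requires the full strength of the support theorem for $h$ and a delicate analysis of how products of low-degree classes exhaust the higher perverse pieces; this is the non-formal input, and is why the conjecture remains open in general. A secondary, more technical obstacle is that multiplicativity of a perverse filtration is not automatic once the total space is non-proper over a point, so the first step must really be proved rather than quoted --- possibly by reduction to a partial compactification of the Hitchin system along the lines of this paper's compact results.
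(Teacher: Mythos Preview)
The statement you were asked to prove is labelled \textbf{Conjecture} in the paper, not Theorem; it is the numerical $P=W$ conjecture of de~Cataldo--Hausel--Migliorini, and the paper does \emph{not} claim to prove it. It is quoted only as motivation for the compact analog (Theorem~\ref{P=H}). So there is no ``paper's own proof'' to compare your proposal against.

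Your write-up is in fact not a proof but a discussion of possible strategies, and you yourself flag the genuine obstructions: multiplicativity of the perverse filtration for the Hitchin map is not available from this paper (the argument in Appendix~\ref{app} uses the LLV algebra and the Beauville--Bogomolov form, both of which rely on compactness of $M$), and pinning down the perverse degrees of the tautological generators is essentially the heart of $P=W$. Your remark that ``the conjecture remains open in general'' is exactly the status the paper assigns to it. In short, the gap is not a flaw in your reasoning but the fact that the target statement is open; no proof is expected here.
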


The Hodge numbers $h^{i,j}(\CM_B)$ have been studied in \cite{HLR, HRV}. A closed formula was conjectured in \cite[Conjecture 1.2.1]{HLR} in terms of combinatorial data.

Furthermore, the perverse numbers $^\mathfrak{p}h^{i,j}(\CM_{\mathrm{Dol}})$ appear naturally as refined Gopakumar--Vafa invariants, which count curves on the local Calabi--Yau~$3$-fold
\[
T^\ast C \times \BC;
\]
see \cite{CDP, MT} and Section \ref{GVinv} for more discussions.

\subsection{Lagrangian fibrations}
Let $M$ be a projective irreducible holomorphic symplectic variety\footnote{Here $M$ is a simply connected nonsingular projective variety with $H^0(M, \Omega_M^2)$ spanned by a nowhere degenerate holomorphic $2$-form $\sigma$.}, or equivalently, an algebraic compact hyper-K\"ahler manifold. A compact analogue of a completely integrable system is a \emph{Lagrangian fibration}
\begin{equation}\label{lag_fib}
\pi: M \to B    
\end{equation}
with respect to the holomorphic symplectic form $\sigma$ on $M$; see \emph{e.g.}~\cite{Be}.\footnote{By definition, the base of a Lagrangian fibration is always assumed to be normal, and the fibers are always connected.} It can be viewed as a higher-dimensional generalization of a $K3$ surface with an elliptic fibration.

The main result of this paper reveals a perfect match between the perverse numbers $^\mathfrak{p}h^{i,j}(M)$ associated with the Lagrangian fibration (\ref{lag_fib}) and the Hodge numbers $h^{i, j}(M)$. 

\begin{thm}[Numerical ``Perverse = Hodge''] \label{P=H}
For any projective irreducible holomorphic symplectic variety $M$ equipped with a Lagrangian fibration $\pi: M \to B$, we have
\begin{equation}\label{P=H_eq}
^\mathfrak{p}h^{i,j}(M)=h^{i,j}(M).
\end{equation}
\end{thm}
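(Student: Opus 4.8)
The plan is to exhibit both sides of \eqref{P=H_eq} as the bi-graded dimensions of \emph{one} $\BQ$-vector space equipped with two different but ``equivalent'' pairs of commuting Lefschetz-type operators: a ``perverse'' pair read off from the Lagrangian fibration, and a ``Hodge'' pair read off from the Looijenga--Lunts--Verbitsky action on $H^\ast(M,\BQ)$. Since the Hodge numbers of $M$ and --- by local constancy of the decomposition theorem in families --- the perverse numbers of $\pi$ are unchanged under deformations of $M$ keeping the class of the fibration of type $(1,1)$, one may first pass to a very general such deformation, so that $\mathrm{NS}(M)$ has rank $2$, spanned by $\pi^\ast A$ (for $A$ ample on $B$) and a $\pi$-relatively ample class $\eta$; but the argument runs inside $\mathfrak{so}(\widetilde V)\subseteq\Fg(M)$ in any case, where $\widetilde V=\bigl(H^2(M,\BQ),q\bigr)\oplus U$ is the Beauville--Bogomolov form enlarged by a hyperbolic plane $U$ and $\Fg(M)$ is the Looijenga--Lunts--Verbitsky Lie algebra, together with Verbitsky's theorem that this action encodes the Hodge decomposition of $H^\ast(M,\BC)$.

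On the perverse side I would use Matsushita's theorems: $\dim B=n$, the class $\pi^\ast A$ is isotropic for $q$ (with $(\pi^\ast A)^n\neq 0$, $(\pi^\ast A)^{n+1}=0$), and $B$ has the rational cohomology of $\BP^n$. The decomposition theorem gives $R\pi_\ast\BQ_M[2n]\cong\bigoplus_c {}^{\mathfrak p}\CH^c[-c]$ with each ${}^{\mathfrak p}\CH^c$ a semisimple perverse sheaf on $B$, whence
\[
{}^{\mathfrak p}h^{i,j}(M)=\dim \mathbb H^{\,j-n}\!\bigl(B,\,{}^{\mathfrak p}\CH^{\,i-n}\bigr).
\]
On $\mathrm{Gr}^P_\bullet H^\ast(M,\BQ)$ there are then two commuting Lefschetz operators. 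Cup product with $\eta$ raises the perverse degree by $2$, and by relative hard Lefschetz it induces an $\mathfrak{sl}_2$ on which $\mathrm{Gr}^P_i H^{i+j}$ has weight $i-n$. Cup product with $\pi^\ast A$ preserves the perverse degree (being pulled back from $B$, it is compatible with perverse truncation), and by hard Lefschetz for each semisimple perverse sheaf ${}^{\mathfrak p}\CH^c$ on the projective variety $B$ it induces a second $\mathfrak{sl}_2$ on which $\mathrm{Gr}^P_i H^{i+j}$ has weight $j-n$. So $\mathrm{Gr}^P_\bullet H^\ast(M,\BQ)$ is a bi-graded module for these two commuting $\mathfrak{sl}_2$'s, the contribution to ${}^{\mathfrak p}h^{i,j}(M)$ sitting in bi-weight $(i-n,\,j-n)$; note the \emph{sum} of the two weights equals the cohomological degree minus $2n$. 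In particular one reads off ${}^{\mathfrak p}h^{i,j}={}^{\mathfrak p}h^{2n-i,j}$ and ${}^{\mathfrak p}h^{i,j}={}^{\mathfrak p}h^{i,2n-j}$, matching the Hodge symmetries $h^{i,j}=h^{2n-i,j}$ (from the holomorphic symplectic form) and $h^{i,j}=h^{i,2n-j}$.

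On the Hodge side, the hyper-K\"ahler $\mathfrak{sl}_2$-action provides, inside $\Fg(M)_\BC$, two commuting Lefschetz $\mathfrak{sl}_2$'s on $H^\ast(M,\BC)$: the hard Lefschetz $\mathfrak{sl}_2$ of a K\"ahler class, with Cartan the degree operator (eigenvalue $p+q-2n$ on $H^{p,q}$), and the ``rotating'' $\mathfrak{sl}_2$ with raising operator cup product with the holomorphic symplectic form $\sigma$ and Cartan the Weil operator (eigenvalue $p-q$ on $H^{p,q}$); these commute and realise $H^{p,q}(M)$ as the bi-weight space $(p+q-2n,\,p-q)$. Under the change of coordinates $(a,b)\mapsto(a+b,\,a-b)$ on the weight lattice, the perverse parametrisation $(i-n,j-n)$ and the Hodge parametrisation both become $\bigl(i+j-2n,\,i-j\bigr)$, and the first coordinate --- the cohomological degree minus $2n$ --- is literally the same operator in the two pictures. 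Hence \eqref{P=H_eq} reduces to the statement that the ``perverse'' and ``Hodge'' copies of $\mathfrak{sl}_2\times\mathfrak{sl}_2$ give the same bi-weight multiplicities on $H^\ast(M,\BQ)$; it suffices to produce an automorphism of the $\Fg(M)$-module $H^\ast(M,\BQ)$ fixing the cohomological grading and intertwining the two pairs. Equivalently, this is the \emph{Perverse--Hodge symmetry} ${}^{\mathfrak p}h^{i,j}={}^{\mathfrak p}h^{j,i}$ (matching $h^{i,j}=h^{j,i}$), interchanging the two $\mathfrak{sl}_2$-factors, in the strong form that the two pairs lie in the same $SO(\widetilde V_\BC)$-orbit.

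The main obstacle --- and the real content --- is precisely this bridge: showing that the relative-Lefschetz and base-Lefschetz operators, which a priori act only on the associated graded $\mathrm{Gr}^P_\bullet H^\ast(M,\BQ)$, realise inside $\Fg(M)$ a pair of commuting Lefschetz $\mathfrak{sl}_2$'s in the same $SO(\widetilde V)$-orbit as the Hodge pair. This is delicate because the relative-Lefschetz $\mathfrak{sl}_2$ has \emph{Cartan the perverse degree, not the cohomological degree}, so it is not simply the Lefschetz $\mathfrak{sl}_2$ of $\eta$ in $\Fg(M)$; one must show the perverse filtration is nonetheless intrinsic to the $\Fg(M)$-module structure, determined by the isotropic line $\BQ\,\pi^\ast A\subset H^2(M,\BQ)$. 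I expect this to hinge on combining Matsushita's theorems --- which make $\pi^\ast A$ a genuine null vector of $\widetilde V$ and pin down the cohomology of $B$ --- with a compatibility between the decomposition theorem for $\pi$ and the cup-product action of $H^2(M,\BQ)$, forcing the two $\mathfrak{sl}_2$'s to close up inside $\Fg(M)$ with $\pi^\ast A$ playing the role of a root vector. Once this structural statement is in place, the deformation reduction, the computation of $\mathrm{Gr}^P$ from the decomposition theorem, and the conjugacy argument in $\mathfrak{so}(\widetilde V_\BC)$ are comparatively routine.
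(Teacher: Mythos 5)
Your setup is the right one (the LLV algebra, a ``perverse'' $\mathfrak{sl}_2\times\mathfrak{sl}_2$ with bi-weights $(i-n,j-n)$ and a ``Hodge'' one with bi-weights read off from $(p+q-2n,p-q)$), but the argument stops exactly where the theorem begins: the paragraph starting ``The main obstacle'' concedes that you have not shown how the relative-Lefschetz/base-Lefschetz pair, which a priori acts only on $\mathrm{Gr}^P_\bullet H^\ast(M,\BQ)$, is realized by cohomological operators in a way compatible with the perverse filtration, and everything after that is an expectation rather than a proof. The paper closes this gap with two concrete ingredients missing from your sketch. First, a purely perverse-sheaf-theoretic criterion (Proposition \ref{prop1.2}): if cup products with a $\pi$-relative ample class $\eta$ and the pullback $\beta$ of an ample class on $B$ induce a primitive decomposition of $H^\ast(M,\BQ)$ of the shape \eqref{V_decomp}, then that decomposition automatically splits the perverse filtration (proved by comparison with Deligne's splittings), so the bi-weight multiplicities of such a \emph{cohomological} $\mathfrak{sl}_2\times\mathfrak{sl}_2$ literally compute the perverse numbers; this is what substitutes for your unproved claim that the perverse filtration is ``intrinsic to the $\Fg(M)$-module structure''. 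Second, the existence of such a decomposition: one first replaces the ample class $\eta'$ by $\eta=\eta'+\lambda\beta$ so that $q_M(\eta)=0$, completes the isotropic pair $(\eta,\beta)$ to a point $p\in D^\circ$ with $\sigma_p=\eta$, $\bar\sigma_p=\beta$, and extends the $\mathfrak{so}(5)$-relations to this \emph{genuinely complex} point by Zariski density of the twistor triples in $D$ (Proposition \ref{density}, Corollary \ref{variant}); Bogomolov's $w^{n+1}=0$ for isotropic $w$ then yields the primitive decomposition \eqref{primitive_decomp1}. Because $p$ does not come from any hyper-K\"ahler structure, the matching with the Hodge side is not effected by an automorphism of the $\Fg(M)$-module intertwining the two pairs, but by varying the $\mathfrak{so}(5)$-action algebraically over the connected variety $D^\circ$ and invoking rigidity of representations of semisimple Lie algebras to conserve weight-space dimensions (Corollary \ref{conserve}). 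Without these steps your proposal does not yet prove \eqref{P=H_eq}.

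Two smaller points. Your opening reduction to a very general deformation with $\mathrm{NS}(M)$ of rank $2$ invokes deformation invariance of the perverse numbers of a Lagrangian fibration, which is itself a nontrivial claim not established in your sketch (and is not needed in the paper's argument). Also, the assertion that Theorem \ref{P=H} is ``equivalently'' the symmetry ${}^\mathfrak{p}h^{i,j}(M)={}^\mathfrak{p}h^{j,i}(M)$ is unjustified: a numerical symmetry of the perverse numbers does not by itself identify them with the Hodge numbers; what suffices is the stronger statement you aim at (conjugacy, or in the paper a connected algebraic family of $\mathfrak{so}(5)$-actions linking the two bigradings), and that is precisely the content that still needs proof.
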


The connection between Theorem \ref{P=H} and Conjecture \ref{P=W} can be seen from the following differential geometric point of view. 

Let $\CM_{\mathrm{dR}}$ denote the de Rham moduli space parametrizing rank $n$ flat bundles. By \cite{Hit, Simp95} (see also \cite[Section 2]{HT} for the twisted case), the complex structure of $\CM_{\mathrm{dR}}$ is obtained via a hyper-K\"ahler rotation from the complex structure of $\CM_{\mathrm{Dol}}$. Moreover, the Riemann--Hilbert correspondence implies that the character variety~$\CM_B$ is biholomorphic to $\CM_{\mathrm{dR}}$,
\[
\CM_{\mathrm{Dol}} \xrightarrow{\textrm{HK rot.}} \CM_{\mathrm{dR}} \xrightarrow{\textrm{bihol.}} \CM_B.  
\]
Hence Conjecture \ref{P=W} suggests that the perverse numbers associated with the Hitchin fibration \eqref{Hitchin_fib} can be calculated from Hodge theory after a certain hyper-K\"ahler rotation of $\CM_{\mathrm{Dol}}$ composed with a biholomorphic transformation. Since the Hodge numbers of a compact hyper-K\"ahler manifold do not change after hyper-K\"ahler rotations or biholomorphic transformations, we can view Theorem \ref{P=H} as a compact analogue of the numerical $P=W$ conjecture.

A stronger and ``more than numerical'' version of Theorem \ref{P=H} is given in Theorem \ref{full_P=W}. Note that unlike the original~$P=W$, here one does \emph{not} expect a naive identification of the perverse and Hodge filtrations. The perverse filtration is topological and stays defined with $\mathbb{Q}$-coefficients after hyper-K\"ahler rotations or biholomorphic transformations, while the Hodge filtration is highly transcendental. Hence the matching necessarily involves nontrivial ``periods''.

Further, we mention some subsequent work concerning perverse filtrations and the $P=W$ phenomenon. We refer to \cite{HLSY} for the relation between the perverse and monodromy weight filtrations associated with (degenerations of) compact hyper-K\"ahler manifolds, and to \cite{dCMS} for the connection between the compact and noncompact/Hitchin worlds.

% We establish a connection between the Hodge structure of a holomorphic symplectic variety and the perverse numbers  is \emph{projective} in , and we discussion

\subsection{Applications}
We discuss two applications of Theorem \ref{P=H}.

\subsubsection{Cohomology of the base and fibers of a Lagrangian fibration}
A central question in the study of the Lagrangian fibration (\ref{lag_fib}) is the following.

\begin{question}\label{question_lag}
Is the base $B$ always isomorphic to a projective space?
\end{question}

Question \ref{question_lag} has been studied in \cite{Hw, Ma1, Ma2, Ma3}. Notably, Hwang \cite{Hw} gave an affirmative answer assuming $B$ nonsingular. Without this assumption, the answer is known only in dimension up to $4$ by the recent work \cite{BK, HX} based on the earlier \cite{Ou}.

The following theorem answers a cohomological version of Question \ref{question_lag} without any assumption on the base. It also computes the invariant cohomology classes on the fibers of $\pi : M \to B$ in all degrees. This generalizes Matsushita's results \cite{Ma3} and \cite[Lemma 2.2]{Ma4}; see also \cite{Og} which assumes~$B \simeq \BP^n$.

\begin{thm}\label{thm0.3}
Let $M$ be a projective irreducible holomorphic symplectic variety of dimension~$2n$ equipped with a Lagrangian fibration $\pi: M \to B$. Then
\begin{enumerate}
\item[(a)] the intersection cohomology of the base $B$ is given by
\[
\mathrm{IH}^d(B, \BQ) = 
\begin{cases}
\langle \beta^k \rangle, & d=2k;\\
0, & d=2k+1,
\end{cases}
\]
where $\beta$ is an ample divisor class on $B$; in particular, we have 
\[
\mathrm{IH}^d (B ,\BQ) \simeq H^d (\BP^n, \BQ);
\]

\item[(b)] the restriction of $H^d(M, \BQ)$ to any nonsingular fiber $M_b \subset M$ is given by
\[
\mathrm{Im} \left\{H^d(M, \BQ) \to H^d(M_b, \BQ)\right\} = 
\begin{cases}
\langle \eta^k|_{M_b} \rangle, & d=2k;\\
0, & d=2k+1,
\end{cases}
\]
where $\eta$ is a $\pi$-relative ample divisor class on $M$.
\end{enumerate}
\end{thm}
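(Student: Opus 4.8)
The plan is to deduce Theorem~\ref{thm0.3} from the numerical ``Perverse = Hodge'' statement of Theorem~\ref{P=H}, combined with the known structure of the cohomology of hyper-K\"ahler manifolds in low degree and the decomposition theorem for $\pi: M \to B$. First I would recall that by Matsushita's theorem the base of a Lagrangian fibration has dimension $n$ and $R^i\pi_* \BQ_M$ for a smooth fiber behaves like that of an abelian scheme; more precisely the decomposition theorem gives $R\pi_*\BQ_M[2n] = \bigoplus_{i} {}^{\mathfrak p}\!R^i[-i]$, and the perverse pieces ${}^{\mathfrak p}\!R^i$ are (shifted) IC-sheaves whose generic ranks are the Betti numbers of the fiber. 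The perverse number ${}^{\mathfrak p}h^{i,j}(M)$ records, in bidegree $(i,j)$, the contribution of $H^j$ of the $i$-th perverse piece.

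The key computation is in the two extreme perverse degrees. For part (a), I would look at the perverse degree $i=0$ row: $\bigoplus_j {}^{\mathfrak p}h^{0,j}(M)$ is, up to the decomposition theorem, governed by $\mathrm{IH}^*(B,\BQ)$ together with contributions of IC-sheaves supported on proper subvarieties. Theorem~\ref{P=H} forces $\sum_j {}^{\mathfrak p}h^{0,j}(M) = \sum_j h^{0,j}(M) = \sum_j h^{0,j}(M)$, and for a hyper-K\"ahler manifold $h^{0,j}(M) = h^{j,0}(M)$ equals $1$ if $j$ is even, $0\le j\le 2n$, and $0$ if $j$ odd (since $H^0(M,\Omega^j)$ is generated by $\sigma^{j/2}$). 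So the total dimension of the $0$-th perverse row is exactly $n+1$, with the Hodge symmetry pinning down the distribution: ${}^{\mathfrak p}h^{0,2k}(M)=1$ for $0 \le k \le n$ and $0$ otherwise. Since $\mathrm{IH}^{2k}(B,\BQ)$ already contains the nonzero class $\beta^k$ (hard Lefschetz / ampleness of $\beta$ on the $n$-dimensional $B$) for $0\le k\le n$, this accounts for all $n+1$ dimensions, leaving no room either for odd intersection cohomology of $B$ or for extra summands supported on proper subvarieties; hence $\mathrm{IH}^d(B,\BQ) = \langle \beta^k\rangle$ for $d=2k$ and vanishes for $d$ odd. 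For part (b), I would run the dual argument in the top perverse degree $i=2n$: relative hard Lefschetz identifies $\mathrm{Gr}^P_{2n}$ with $\mathrm{Gr}^P_0$ twisted, and the image of $H^d(M,\BQ) \to H^d(M_b,\BQ)$ for a smooth fiber $M_b$ is exactly the piece of $\mathrm{Gr}^P_0 H^d$ coming from the "honest" local system on the smooth locus — which by the previous paragraph is one-dimensional spanned by a power of the restriction of the relative ample class $\eta$ when $d$ is even, and zero when $d$ is odd. I would make this precise using the fact that restriction to a general fiber factors through the $P_0$ quotient paired against the smooth part of $R\pi_*$, and that $\eta|_{M_b}$ generates the monodromy-invariant part of $H^*(M_b,\BQ)$ exactly when this part is as small as the perverse count allows.

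I expect the main obstacle to be bookkeeping the decomposition theorem carefully enough to separate the contribution of $\mathrm{IC}$-sheaves on the full base $B$ from those supported on proper closed subsets $Z \subsetneq B$, and to argue that the numerical constraint from Theorem~\ref{P=H} actually kills the latter rather than being satisfied by some conspiratorial cancellation. The resolution is that the perverse numbers are dimensions of associated graded pieces, hence genuinely additive over the summands of the decomposition theorem — there is no cancellation — so any summand supported on a proper $Z$ would strictly increase $\sum_j {}^{\mathfrak p}h^{0,j}(M)$ beyond $n+1$, a contradiction. A secondary point requiring care is showing the relevant classes $\beta^k$ and $\eta^k|_{M_b}$ are genuinely nonzero (not just that the spaces are $\le 1$-dimensional); for $\beta$ this is ampleness on the normal projective variety $B$ together with the fact that $\mathrm{IH}^*$ of a projective variety satisfies hard Lefschetz, and for $\eta|_{M_b}$ this is that $\eta$ restricts to an ample class on the smooth fiber $M_b$, so its powers are nonzero up to the middle dimension $n = \dim M_b$ and then one uses that $M_b$ is (for $\pi$ with connected fibers, generically an abelian variety by Matsushita) to get nonvanishing of $\eta^k|_{M_b}$ in all degrees $0 \le k \le n$. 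Finally, I would note the statement $\mathrm{IH}^d(B,\BQ) \simeq H^d(\BP^n,\BQ)$ is then immediate.
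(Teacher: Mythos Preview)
Your argument for part~(a) is correct and is exactly the paper's: since $\pi$ has connected fibers, ${^\mathfrak{p}\mathcal{H}}^0(R\pi_*\BQ_M[n])$ splits off $\mathrm{IC}_B$ as a direct summand, so $\dim \mathrm{IH}^d(B,\BQ) \leq {^\mathfrak{p}h^{0,d}}(M) = h^{0,d}(M)$, and the latter is $1$ for $d$ even with $0\le d\le 2n$ and $0$ otherwise; combined with $\beta^k \neq 0$ in $\mathrm{IH}^{2k}(B,\BQ)$ this pins down $\mathrm{IH}^*(B,\BQ)$. Your worry about ``conspiratorial cancellation'' is also handled correctly: perverse numbers are dimensions of graded pieces and hence genuinely additive over the summands of the decomposition theorem.

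For part~(b), however, you have the bookkeeping backwards in a way that breaks the argument as written. The restriction $H^d(M,\BQ) \to H^d(M_b,\BQ)$ does \emph{not} factor through $\mathrm{Gr}^P_0 H^d$: over the smooth locus ${^\mathfrak{p}\mathcal{H}}^0$ is the shifted constant sheaf, so classes in $P_0H^d$ are pullbacks from~$B$ and restrict to zero on a fiber in positive degree. Restriction factors instead through the \emph{top} graded piece $\mathrm{Gr}^P_d H^d$ of $H^d$, via the edge map
\[
H^d(M,\BQ) \to H^{-n}\bigl(B,{^\mathfrak{p}\mathcal{H}}^d(R\pi_*\BQ_M[n])\bigr)
\]
followed by restriction to $H^0(B^\circ, R^d\pi^\circ_*\BQ_{M^\circ})$ and evaluation at $b$; this is the commutative diagram the paper writes down. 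The resulting bound is by ${^\mathfrak{p}h^{d,0}}(M) = h^{d,0}(M)$, not by ${^\mathfrak{p}h^{0,d}}(M)$ or ${^\mathfrak{p}h^{2n,d}}(M)$. Numerically these all coincide (Hodge symmetry on the Hodge side of Theorem~\ref{P=H}), so your conclusion survives once the indexing is corrected, but the mechanism you invoke --- ``the piece of $\mathrm{Gr}^P_0 H^d$ coming from the honest local system'', ``the $P_0$ quotient'' --- is not what is doing the work, and the relative hard Lefschetz isomorphism between $\mathrm{Gr}^P_{2n}$ and $\mathrm{Gr}^P_0$ plays no role here.
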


Theorem \ref{thm0.3} is a direct consequence of Theorem \ref{P=H}; see Section \ref{secapps}. Via the bridge established in (\ref{P=H_eq}), the holomorphic symplectic form on~$M$ effectively controls the cohomology of the base and fibers of~$\pi : M \to B$.

In Appendix \ref{appb}, an alternative and more direct proof of Theorem \ref{thm0.3} (b) is given by Claire Voisin. We thank her for kindly agreeing to provide the appendix.

\subsubsection{Curve counting invariants} \label{GVinv}
Perverse numbers play an important role in recent constructions of curve counting invariants. For a Calabi--Yau $3$-fold $X$ and a curve class $\beta \in H_2(X, \BZ)$, Gopakumar and Vafa \cite{GV} predicted integer-valued invariants
\[
n_{g, \beta} \in \BZ
\]
counting genus $g$ curves on $X$ lying in the class $\beta$. The \emph{Gopakumar--Vafa (GV)} or \emph{BPS invariants} are expected to govern all the Gromov--Witten (GW) invariants $N_{g, \beta}$ of~$X$ and $\beta$; see \cite{PT} for more details.

Recently, following the ideas of \cite{HST, KL}, Maulik and Toda \cite{MT} proposed a definition of GV invariants via the Hilbert--Chow morphism
\begin{equation}\label{HC}
\mathrm{hc}: \CM_{\beta} \to \mathrm{Chow}_\beta (X).
\end{equation}
Here $\CM_\beta$ is the moduli space of $1$-dimensional stable sheaves on $X$ with
\[
\mathrm{Supp}(\CF) = \beta, \quad \chi(\CF) = 1,
\] 
and $\mathrm{Chow}_\beta(X)$ is the corresponding Chow variety. When $\CM_\beta$ is nonsingular, the invariants $n_{g,\beta}$ take the form
\[
\sum_{i\in \BZ}\chi \left({^\mathfrak{p}\mathcal{H}}^i(R{\mathrm{hc}}_\ast \mathbb{Q}_{\mathcal{M}_\beta}[\dim\mathcal{M}_\beta])\right)y^i = \sum_{g\geq 0} n_{g, \beta} (y^{\frac{1}{2}}+y^{-\frac{1}{2}} )^{2g}.
\]
In particular, the perverse numbers $^\mathfrak{p}h^{i,j}(\CM_\beta)$ associated with the map (\ref{HC}) determine and refine the GV invariants $n_{g,\beta}$.

If $X$ is the local Calabi--Yau $3$-fold $T^\ast C \times \BC$, the Hilbert--Chow morphism~(\ref{HC}) is induced by the Hitchin system (\ref{Hitchin_fib}); see \cite[Section 9.3]{MT}. Hence Conjecture \ref{P=W} together with \cite[Conjecture 1.2.1]{HLR} provides complete answers to the refined GV invariants of $T^\ast C \times \BC$.

Analogously, if $X$ is the Calabi--Yau $3$-fold $S \times \BC$ with $S$ a $K3$ surface, the map (\ref{HC}) is induced by the Beauville--Mukai system
\[
\pi: M \to \BP^n.
\]
Here $M$ is a projective irreducible holomorphic symplectic variety of $K3^{[n]}$-type with $n = \frac{1}{2}\beta^2 +1$. The following theorem is a direct consequence of Theorem \ref{P=H}. 

\begin{thm}\label{new_thm} Let $X = S \times \BC$ with $S$ a $K3$ surface, and let $\beta \in H_2(X, \BZ)$ be a curve class.\footnote{The result holds equally for certain curve classes $\beta$ on nontrivial $K3$-fibered Calabi--Yau $3$-folds $X$. Here we ask that $\beta$ is supported on a $K3$ surface fiber, and that the base curve of $X$ is transverse to the Noether--Lefschetz locus $\mathrm{NL}_\beta$. In this case, the map \eqref{HC} is also induced by the Beauville--Mukai system; see \cite[Appendix A]{KKP}.} Then
\begin{enumerate}
\item[(a)](Multiple cover formula) the invariant $n_{g,\beta}$ only depends on $g$ and the intersection number $\beta^2$;
\item[(b)](GW/GV correspondence) we have
\begin{equation*}\label{KKV}
\sum_{g\geq 0, \beta>0} N_{g,\beta} \lambda^{2g-2}t^{\beta} = \sum_{g\geq 0, \beta>0, k\geq 1} \frac{n_{g,\beta}}{k} \left( \mathrm{sin}\left( \frac{k\lambda}{2} \right)\right)^{2g-2}t^{k\beta}.
\end{equation*}
Here $N_{g,\beta}$ is the genus $g$ reduced GW invariant of $X$ (or $S$) in the class $\beta$.
\end{enumerate}
\end{thm}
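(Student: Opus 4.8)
The plan is to deduce both parts from Theorem~\ref{P=H} applied to the Beauville--Mukai system $\pi\colon M\to\BP^n$, together with the known structure of the Hodge numbers of $K3^{[n]}$-type varieties and with Pandharipande--Thomas's proof of the Katz--Klemm--Vafa formula on the Gromov--Witten side. First I would reduce the refined Gopakumar--Vafa invariants to a Lagrangian fibration. By \cite[Section~9.3]{MT} (and \cite[Appendix~A]{KKP} in the fibered situation of the footnote), for $X=S\times\BC$ every compactly supported $1$-dimensional stable sheaf in class $\beta$ is scheme-theoretically supported on a single $K3$ fiber, so the Hilbert--Chow morphism~\eqref{HC} is the product of the Beauville--Mukai support map $\pi\colon M\to\BP^n$ with $\mathrm{id}_\BC$. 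Here $M$ is the moduli space of stable sheaves on $S$ with Mukai vector $(0,\beta,1)$, a nonsingular holomorphic symplectic variety of $K3^{[n]}$-type with $n=\tfrac12\beta^2+1$. After passing to perverse cohomology sheaves and Euler characteristics, the non-compact factor $\BC$ contributes only to the overall normalization, so the Maulik--Toda invariant $n_{g,\beta}$ recalled in Section~\ref{GVinv} is determined by the perverse numbers $^\mathfrak{p}h^{i,j}(M)$ of $\pi$.

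Part~(a) is then immediate: by Theorem~\ref{P=H} these perverse numbers equal the Hodge numbers $h^{i,j}(M)$, and since Hodge numbers are constant in smooth projective families and $M$ has the deformation type of $\Hilb^n(S)$, the numbers $h^{i,j}(M)$ depend only on $n$, hence only on $\beta^2$; therefore so does $n_{g,\beta}$.

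For part~(b) I would make this dependence explicit. Unwinding the Maulik--Toda formula through the degeneration of the perverse Leray spectral sequence of $\pi$ (a consequence of the decomposition theorem) and invoking Theorem~\ref{P=H}, the series $\sum_{g\ge0} n_{g,\beta}\,(y^{1/2}+y^{-1/2})^{2g}$ becomes, up to a normalizing monomial in $y$, a specialization of the Hodge numbers of $K3^{[n]}$ (essentially its $\chi_y$-genus). Summing over $n$ and applying G\"ottsche's formula for the Hodge numbers of Hilbert schemes of points on a surface produces a closed infinite product for $\sum_n\bigl(\sum_g n_{g,\beta}(y^{1/2}+y^{-1/2})^{2g}\bigr)q^n$, and the next step is to check that this product is exactly the Katz--Klemm--Vafa product, so that $n_{g,\beta}$ equals the KKV BPS number $\bar n_{g,\beta}$. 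The identity in~(b) then follows by combining this with the theorem of Pandharipande--Thomas that the reduced Gromov--Witten invariants of a $K3$ surface are governed by the KKV formula with BPS numbers $\bar n_{g,\beta}$, using the invariance of reduced $K3$ invariants under deformation of the pair $(S,\beta)$ to pass between $X=S\times\BC$ and $S$ and among curve classes of a fixed square.

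All the structural input is provided by Theorem~\ref{P=H}, the Maulik--Toda construction, G\"ottsche's formula, and the Pandharipande--Thomas theorem. The main obstacle will be the bookkeeping in the last step: fixing the precise shifts and signs relating the Maulik--Toda perverse Euler characteristics of $\pi$ to the $\chi_y$-genus of $M$, and then matching the resulting G\"ottsche-type product term by term with the Katz--Klemm--Vafa product, including the low-$n$ boundary cases and the contribution of the extra $\BC$-factor. This comparison is purely combinatorial but has to be carried out carefully.
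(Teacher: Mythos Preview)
Your proposal is correct and follows essentially the same route as the paper: reduce the Hilbert--Chow morphism to the Beauville--Mukai system, apply Theorem~\ref{P=H} to replace perverse numbers by Hodge numbers of a $K3^{[n]}$-type variety (giving (a) by deformation invariance), and then for (b) match the resulting Hodge-theoretic expression against the Katz--Klemm--Vafa formula proved on the Gromov--Witten side by \cite{MPT, PT2}. The paper's own argument is stated more tersely and does not spell out the G\"ottsche/$\chi_y$-genus bookkeeping you flag, but the underlying strategy is identical.
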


%for certain\footnote{For $X = K3 \times \BC$, we require that the curve class $\beta$ is primitive. For nontrivial $K3$-fibered Calabi--Yau $3$-folds $X$, we ask that $\beta$ is supported on a $K3$ fiber, and that the base curve of $X$ is transverse to the Noether--Lefschetz locus $\mathrm{NL}_\beta$; see \cite[Appendix A]{KKP}.}

For curve classes $\beta$ with different divisibilities, the fibrations (\ref{HC}) are not deformation equivalent. However, by Theorem \ref{P=H}, the invariants $n_{g,\beta}$ (more generally, the perverse numbers $^\mathfrak{p}h^{i,j}(\CM_\beta)$) only rely on the dimension of the moduli space $\mathcal{M}_\beta$ thus independent of the fibration (\ref{HC}). This verifies Theorem \ref{new_thm} (a).

The reduced GW invariants of a $K3$ surface in all genera and all curve classes were computed in \cite{MPT, PT2}, proving a conjecture of Katz--Klemm--Vafa~\cite{KKV}. Theorem \ref{P=H} provides a direct calculation of all the refined GV invariants of a $K3$ surface in terms of the Hodge numbers of the Hilbert schemes of points on a $K3$ surface, in perfect match with the Katz--Klemm--Vafa formula. This verifies Theorem \ref{new_thm} (b) and offers a conceptual explanation why the enumerative geometry of $K3$ surfaces is related to the Hodge theory of the Hilbert schemes of $K3$ surfaces.

Previously, the GV invariants of a $K3$ surface in primitive curve classes were computed in \cite{HST, KY, KL} using the concrete geometry of the Hilbert schemes of points and moduli spaces of sheaves on a $K3$ surface. See also~\cite{KKP} for another proposal of the refined GV invariants of a $K3$ surface and its relation with the Maulik--Toda proposal.

%Theorem \ref{P=H} now offers a conceptual explanation why the enumerative geometry of Calabi--Yau $3$-folds associated with $K3$ surfaces is related to the Hodge theory of Hilbert schemes of $K3$ surfaces. It further demonstrates the analogy between Theorem \ref{P=H} and Conjecture \ref{P=W}.

\subsection{Idea of the proof}
The proof of Theorem \ref{P=H} compares two structures, one on each end of the equality (\ref{P=H_eq}). It manifests some form of symmetry between holomorphic symplectic geometry and hyper-K\"ahler geometry.

On the perverse side, there is an action of $\mathfrak{sl}_2 \times \mathfrak{sl}_2$ on
the graded pieces
\[\bigoplus_i \mathrm{Gr}_i^PH^*(M, \mathbb{Q}),\]
which arises from the decomposition theorem package and keeps track of the perverse grading. An important observation is that for $M$ holomorphic symplectic, this action admits a natural lifting to $H^*(M, \mathbb{Q})$ via the Beauville--Bogomolov form of $M$; see Proposition \ref{prop1.2}.

On the Hodge side, the key ingredient is the action of a special orthogonal Lie algebra on the cohomology $H^*(M, \mathbb{C})$ of a compact hyper-K\"ahler manifold~$M$. An action of~$\mathfrak{so}(5)$ was first discovered by Verbitsky \cite{Ver90}, and was used to keep track of the Hodge grading. It was later generalized in \cite{Ver95, Ver96} to an action of
\[\mathfrak{so}(b_2(M) + 2),\]
where $b_2(M)$ is the second Betti number of~$M$. The same result was also obtained by Looijenga and Lunts \cite[Section 4]{LL}.

Finally, we construct a variation of $\mathfrak{so}(5)$-actions over the Zariski closure of the space of twistor lines. This provides a ``continuous deformation'' from the decomposition theorem package associated with $\pi: M \to B$ to the Hodge decomposition of $M$; see Theorem~\ref{full_P=W}. The dimensions of the weight spaces are conserved under this variation, proving Theorem \ref{P=H}.

%an action of $\mathfrak{so}(6)$ on the cohomology $H^*(M, \mathbb{C})$ for~$M$ as in Theorem \ref{P=H}.\footnote{Note that we have $b_2(M) \geq 4$.} It contains both the lifted $\mathfrak{sl}_2 \times \mathfrak{sl}_2$-action and Verbitsky's $\mathfrak{so}(5)$-action. Finally, the matching of the perverse and Hodge numbers (\ref{P=H_eq}) is derived from an element of the Weyl group of~$\mathfrak{so}(6)$.

A similar argument also yields the multiplicativity of the perverse filtration on $H^*(M, \mathbb{Q})$. This is of independent interest and is included in Appendix \ref{app}. It is worth mentioning that multiplicativity turns out to be the only remaining step in proving the original $P = W$ conjecture following~\cite{dCMS}.

\subsection{Acknowledgements}
We are grateful to Mark Andrea de Cataldo, Javier Fres\'an, Haoshuo Fu, Daniel Huybrechts, Zhiyuan Li, Davesh Maulik, Ben Moonen, Wenhao Ou, Rahul Pandharipande, Claire Voisin, Botong Wang, Chenyang Xu, Zhiwei Yun, and Zili Zhang for helpful discussions about perverse filtrations, the $P=W$ conjecture, and hyper-K\"ahler manifolds. We wish to thank an anonymous referee of our previous submission for a simpler argument for Corollary \ref{conserve}. We also thank all referees for their careful reading and numerous useful suggestions. 

J.~S.~was supported by the NSF grant DMS-2000726. Q.~Y.~was supported by the NSFC grants 11701014, 11831013, and 11890661.

\section{Perverse filtrations} \label{sec1}

We begin with some relevant properties and examples of the perverse filtration associated with a proper surjective morphism $\pi: X\rightarrow Y$. The standard references are \cite{BBD, dCM0, dCM1, WSB}. For simplicity, we assume $X$ nonsingular projective and $Y$ projective throughout this section.

\subsection{Perverse sheaves}
Let $D_c^b(Y)$ denote the bounded derived category of~$\BQ$-constructible sheaves on $Y$, and let $\BD: D_c^b(Y)^{\mathrm{op}} \to D_c^b(Y)$ be the Verdier duality functor. The full subcategories 
\begin{align*}
    ^ \mathfrak{p} D_{\leq 0}^b(Y) & = \left\{\CE \in D_c^b(Y): \dim \mathrm{Supp}(\CH^i(\CE)) \leq -i \right\}, \\
    ^ \mathfrak{p} D_{\geq 0}^b(Y) & = \left\{\CE \in D_c^b(Y): \dim \mathrm{Supp}(\CH^i(\BD\CE)) \leq -i \right\}
\end{align*}
give rise to the \emph{perverse $t$-structure} on $D_c^b(Y)$, whose heart is the abelian category of perverse sheaves,
\[
\mathrm{Perv}(Y) \subset D_c^b(Y).
\]

For $k \in \BZ$, let $^\mathfrak{p}\tau_{\leq k}$ be the truncation functor associated with the perverse~$t$-structure. Given an object $\CC \in D_c^b(Y)$, there is a natural morphism
\begin{equation}\label{trun0}
^\mathfrak{p}\tau_{\le k}\CC \rightarrow \CC.
\end{equation}
For the map $\pi:X \to Y$, we obtain from (\ref{trun0}) the morphism
\[
^\mathfrak{p}\tau_{\le k}R\pi_\ast \BQ_X \rightarrow R\pi_\ast \BQ_X,
\]
which further induces a morphism of (hyper-)cohomology groups
\begin{equation}\label{perv_filt}
H^{d-(\dim X -r)}\Big{(}Y, \, ^\mathfrak{p}\tau_{\le k}(R\pi_\ast \BQ_X[\dim X -r]) \Big{)} \rightarrow H^d(X, \BQ).
\end{equation}
Here
\[r = \dim X \times_Y X - \dim X\]
is the \emph{defect of semismallness}. The $k$-th piece of the perverse filtration (\ref{Perv_Filtration})
\[
P_kH^d(X, \BQ) \subset H^d(X, \BQ)
\]
is defined to be the image of (\ref{perv_filt}).\footnote{Here the shift $[\dim X-r]$ is to ensure that the perverse filtration is concentrated in the degrees $[0, 2r]$.}

\subsection{The decomposition theorem}
The perverse numbers can be expressed through the decomposition theorem \cite{BBD, dCM0}. By applying the decomposition theorem to the map $\pi: X \to Y$, we obtain an isomorphism
\begin{equation}\label{decomp}
R\pi_*\BQ_X[\dim X-r]\simeq \bigoplus_{i=0}^{2r}\mathcal{P}_i[-i] \in D^b_c(Y)
\end{equation}
with $\mathcal{P}_i  \in \mathrm{Perv}(Y)$. The perverse filtration can be identified as
\[
P_kH^d(X,\BQ)=\mathrm{Im}\Big\{H^{d-(\dim X - r)}(Y, \bigoplus_{i=0}^k\mathcal{P}_i[-i])\to H^d(X,\BQ)\Big\},
\]
and hence
\begin{equation*}
\mathrm{Gr}_i^P H^{i+j} (X, \BQ) \simeq  H^{j- (\dim X - r)}(Y, \CP_i).
\end{equation*}
In particular, the perverse numbers are given by the dimensions
\begin{equation} \label{pervform}
^\mathfrak{p}h^{i,j}(X) = h^{j- (\dim X - r)}(Y, \CP_i).
\end{equation}

For simplicity, we assume from now on the condition
\begin{equation}\label{dim_condition}
\dim X = 2 \dim Y = 2r.
\end{equation}
The decomposition (\ref{decomp}) then implies that the perverse number $^\mathfrak{p}h^{i,j}(X)$ is nontrivial only if
\[
0 \leq i\leq 2r,\quad 0 \leq j \leq 2r.
\]
Moreover, there are symmetries
\begin{equation*}
^\mathfrak{p}h^{i,j}(X) =  {^\mathfrak{p}h}^{2r-i,j}(X)=   {^\mathfrak{p}h}^{i,2r-j}(X)
\end{equation*}
obtained from the hard Lefschetz theorems for perverse cohomology groups associated with the map $\pi: X\to Y$.

Hence, under the condition (\ref{dim_condition}), the perverse numbers behave like the Hodge numbers of a compact K\"ahler manifold of dimension~$2r$.\footnote{However, the symmetry ${^\mathfrak{p}h}^{i,j}(X)= {^\mathfrak{p}h}^{j,i}(X)$ does not hold in general even under the condition (\ref{dim_condition}). A counterexample is given by the projection $\pi: C \times \BP^1 \to \BP^1$ with $C$ a higher genus curve. On the other hand, the symmetry $h^{i,j}(X)=h^{2r-i,j}(X)$, which is satisfied by the perverse numbers, does not hold for a general compact K\"ahler manifold of dimension $2r$.}

\subsection{$\BQ[\eta, \beta]$-modules}

\begin{comment}
The decomposition (\ref{decomp}) is \emph{not} canonical. However, the relative hard Lefschetz theorem 
\begin{equation*}
\eta^i: {^\mathfrak{p}}R^{-i}\pi_\ast \mathrm{IC}_X \xrightarrow{\simeq},  {^\mathfrak{p}}R^i\pi_\ast \mathrm{IC}_X.
\end{equation*}
which depends on the choice of a relative ample divisor $\eta$ on $X$, can fix an isomorphism (\ref{decomp}). In particular, an relative ample class $\eta \in H^2(X, \BQ)$ determines an isomorphism of the vector spaces 
\begin{equation}\label{graded_piece}
    f_\eta: H^\ast(X, \BQ) \xrightarrow{\simeq} \bigoplus_{i,j} G^{\mathfrak{p}}_i H^{i+j} (X, \BQ)
\end{equation}
from the decomposition (\ref{decomp}) and the isomorphism (\ref{iso222}).
\end{comment}

Let $\eta \in H^2(X, \BQ)$ be a relative ample class with respect to the map $\pi: X\to Y$, and let $\beta \in H^2(X, \BQ)$ be the pullback of an ample class on the base variety $Y$. The actions of $\eta$ and $\beta$ on the total cohomology $H^\ast(X, \BQ)$ via cup product give morphisms
\begin{align*}
\eta & : P_kH^i(X, \BQ) \rightarrow P_{k+2}H^{i+2}(X, \BQ), \\
\beta & : P_kH^i(X, \BQ) \rightarrow P_kH^{i+2}(X, \BQ).
\end{align*}
By the hard Lefschetz theorems for perverse cohomology groups, they induce an $\mathfrak{sl}_2 \times \mathfrak{sl}_2$-action on the vector space 
\[
\BH = \bigoplus_{i,j \geq 0} \mathrm{Gr}^P_i H^{i+j} (X, \BQ)
\]
which yields a primitive decomposition of $\BH$. Under the the condition (\ref{dim_condition}), the primitive decomposition takes the form
\begin{equation}\label{primitive_decomp}
    \bigoplus_{0 \leq i, j \leq r} \BQ[\eta]/(\eta^{r - i +1}) \otimes \BQ[\beta]/(\beta^{r - j+1}) \otimes P^{i,j} \xrightarrow{\sim} \BH
\end{equation}
with $P^{i, j} \subset \mathrm{Gr}^P_i H^{i+j} (X, \BQ)$; see \cite[Corollary 3.10]{WSB}.

In general, the decomposition (\ref{primitive_decomp}) does not lift canonically to a splitting of the perverse filtration on $H^\ast(X, \BQ)$; see \cite{dC, D}. The following result provides a criterion for the existence of a canonical splitting.

\begin{prop} \label{prop1.2}
Let $\pi: X \to Y$ be a map satisfying (\ref{dim_condition}). Assume further that the actions of $\eta$ and $\beta$ on $H^\ast(X, \BQ)$ via cup product induce an isomorphism of $\BQ[\eta, \beta]$-modules
\begin{equation}\label{V_decomp}
\bigoplus_{0 \leq i, j \leq r} \BQ[\eta]/(\eta^{r-i+1}) \otimes \BQ[\beta]/(\beta^{r-j+1}) \otimes V^{i,j} \xrightarrow{\sim} H^\ast(X, \BQ)
\end{equation}
with $V^{i,j} \subset H^{i+j}(X, \BQ)$. Then we have
\begin{enumerate}
    \item[(a)] $V^{i,j} \subset P_i H^{i+j}(X, \BQ)$;
    \item[(b)] the decomposition (\ref{V_decomp}) splits the perverse filtration, \emph{i.e.},
    \[
    P_kH^d(X, \BQ) = \bigoplus_{\substack{2a+i \leq k \\ 2a+2b+i+j =d}} \eta^a \beta^b \, V^{i,j};
    \]
    \item[(c)] $\dim V^{i,j} = \dim P^{i, j}$, where $P^{i, j}$ is as in (\ref{primitive_decomp}).
\end{enumerate}
\end{prop}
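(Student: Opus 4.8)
The plan is to exploit the fact that the hypothesized primitive decomposition \eqref{V_decomp} is, by construction, compatible with the bigrading by $(i,j)$ and with the operators $\eta$ and $\beta$, and to check that this forces it to refine the perverse filtration. The essential point is the following: on the associated graded $\BH$, the relative hard Lefschetz theorem (for $\eta$, which shifts perverse degree by $2$) and the hard Lefschetz theorem on the base $Y$ (for $\beta$, which preserves perverse degree) already give the primitive decomposition \eqref{primitive_decomp}; so $\eta$ and $\beta$ act on $\BH$ as two commuting $\mathfrak{sl}_2$-triples, and the weight of a primitive class in $P^{i,j}$ under the $\eta$-$\mathfrak{sl}_2$ records its perverse degree $i$ (relative to the center $r$). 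The content of the proposition is that when the \emph{same} $\mathfrak{sl}_2 \times \mathfrak{sl}_2$ structure already exists on $H^\ast(X,\BQ)$ itself — which is exactly what \eqref{V_decomp} asserts — then that lift is forced to be compatible with the filtration, and the filtration is simply read off from the $\eta$-weight.

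Concretely, here is the order of steps I would carry out. First, I would establish part (a): I claim each $V^{i,j} \subset P_i H^{i+j}(X,\BQ)$. The strategy is downward induction on $i$ (equivalently, upward induction using the $\eta$-Lefschetz symmetry). Using the known fact that $P_\bullet$ is preserved appropriately by $\eta$ and $\beta$ (stated in the excerpt: $\eta$ raises perverse degree by $2$, $\beta$ preserves it) together with the relative hard Lefschetz isomorphism $\eta^{r-i}: \mathrm{Gr}^P_i \xrightarrow{\sim} \mathrm{Gr}^P_{2r-i}$ on $\BH$, one shows that a class in $V^{i,j}$ cannot have smaller perverse level than $i$: if $v \in V^{i,j}$ had $v \in P_{i-1}$, then applying $\eta^{r-i}$ would land in $P_{2r-i-2}$, contradicting that $\eta^{r-i}v$ is (a lift of) a nonzero class in $\mathrm{Gr}^P_{2r-i}$ — here one uses that \eqref{V_decomp} being a direct sum decomposition means $\eta^{r-i}$ is injective on $V^{i,j}$ and its image is disjoint from the other summands modulo lower perverse pieces. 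One also needs the reverse bound $v \in P_i$: this follows because the top nonzero perverse piece containing $v$ must be $\geq i$ by the symmetric argument applied after pairing, or more directly by comparing Poincaré–Euler characteristics. Second, given (a), part (b) is formal: the right-hand side $\bigoplus_{2a+i\le k,\ 2a+2b+i+j=d}\eta^a\beta^b V^{i,j}$ is contained in $P_k H^d$ because $\eta^a\beta^b V^{i,j} \subset \eta^a\beta^b P_i \subset P_{2a+i} \subset P_k$; and it exhausts $P_k H^d$ because the two sides have the same dimension — the full direct sum \eqref{V_decomp} already accounts for all of $H^d(X,\BQ)$, and the graded-dimension count matches that of the associated graded of $P_\bullet$ by \eqref{primitive_decomp}. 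Third, part (c): comparing \eqref{V_decomp} with \eqref{primitive_decomp} term by term, using (b) to identify $\mathrm{Gr}^P_i$ of the left side of \eqref{V_decomp} with the $i$-th perverse piece, forces $\dim V^{i,j} = \dim P^{i,j}$.

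The main obstacle I anticipate is making the dimension-matching in (b) and (c) genuinely rigorous rather than circular. One has two primitive decompositions — \eqref{primitive_decomp} on the associated graded $\BH$ and the hypothesized \eqref{V_decomp} on $H^\ast(X,\BQ)$ — and one must argue that the natural surjection $H^\ast(X,\BQ) \twoheadrightarrow \BH$ (well-defined once (a) and (b) are in place, sending $V^{i,j}$ to a subspace of $\mathrm{Gr}^P_i$) carries one to the other. The cleanest route is: (i) prove (a) first, purely from the Lefschetz structure; (ii) deduce that the map $\bigoplus \eta^a\beta^b V^{i,j} \to \mathrm{Gr}^P H^\ast$ is filtered and its graded pieces are injective, hence by the already-known structure \eqref{primitive_decomp} of $\BH$ it is an isomorphism onto $\BH$; (iii) conclude $\dim V^{i,j}=\dim P^{i,j}$ and that (b) holds. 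A subtlety worth flagging is that \eqref{V_decomp} must be used with care: a priori a given $V^{i,j}$ is only a subspace of $H^{i+j}(X,\BQ)$ with no claimed relation to the filtration, so step (a) is really where the work is, and it relies crucially on the \emph{direct sum} nature of \eqref{V_decomp} (so that the various $\eta^a\beta^b V^{i,j}$ do not interfere) combined with relative hard Lefschetz on $\BH$. I would also double-check the boundary cases $i=r$ or $j=r$ where the Lefschetz powers degenerate, but these should cause no trouble.
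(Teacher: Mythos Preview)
Your argument for (a) has the roles of $\eta$ and $\beta$ reversed, and this is a genuine gap. The condition $\eta^{r-i+1}v=0$ (the $\eta$-primitivity of $V^{i,j}$) constrains the perversity of $v$ from \emph{below}, not from above: if $v\in P_k\setminus P_{k-1}$ with $k<i\le r$, then on $\BH$ the class $[v]$ has $\eta$-weight $k-r<0$, and $\eta^{r-i+1}[v]=0$ in the $\eta$-$\mathfrak{sl}_2$ forces the highest weight of any irrep supporting $[v]$ to be $\le k+r-2i$, contradicting $\ge r-k$. So what your sketch actually proves (once the computation is corrected) is $V^{i,j}\cap P_{i-1}=0$, which is \emph{not} (a). Your ``reverse bound $v\in P_i$'' is the real content of (a), and the phrases ``symmetric argument applied after pairing'' and ``Poincar\'e--Euler characteristics'' do not supply it. The paper's proof of (a) uses the $\beta$-primitivity instead: if $v\in P_k\setminus P_{k-1}$ with $k>i$, then $[v]\in\mathrm{Gr}^P_kH^{i+j}$ is a class in the hypercohomology of a perverse sheaf on $Y$, and the hard Lefschetz theorem for $\beta$ on $Y$ gives $\beta^{r-(i+j-k)}[v]\ne 0$; since $r-(i+j-k)\ge r-j+1$ this contradicts $\beta^{r-j+1}v=0$.

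For (b), you correctly flag the circularity risk, but your proposed fix (``graded pieces are injective'') again requires both the upper bound $V^{i,j}\subset P_i$ and the lower bound $V^{i,j}\cap P_{i-1}=0$, and you have only a vague sketch for the former. The paper sidesteps this by comparing with Deligne's splitting $H^d(X,\BQ)=\bigoplus Q^{i,j;d}$ associated with $\eta$: once (a) is proved via $\beta$, the $\eta$-primitivity $\eta^{r-i+1}v=0$ together with the characterizing property of Deligne's splitting forces $\bigoplus_b\beta^bV^{i,j}\subset Q^{i,0;d}$, and then $\eta^a$ carries this into $Q^{i,a;d}$; since both decompositions exhaust $H^\ast(X,\BQ)$, these inclusions are equalities. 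This avoids any dimension count and makes (c) an immediate consequence.
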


\begin{proof}
We first prove (a). Assume that $v\in V^{i,j}$ does not lie in $P_i H^{i+j}(X, \BQ)$. Then there exists $k >i$ such that 
\[
v\in P_k H^{i+j}(X, \BQ), \quad v\notin P_{k-1} H^{i+j}(X, \BQ).
\]
The hard Lefschetz theorem implies that the equivalence class \[
[v] \in \mathrm{Gr}_k^P H^{i+j}(X, \BQ)
\]
satisfies 
\begin{equation}\label{eqn16}
\beta ^{r- (i+j-k)} [v] \neq 0 \in \mathrm{Gr}_k^P H^{2r - (i + j) + 2k}(X, \BQ). 
\end{equation}
On the other hand, by definition we have $\beta^{r-j+1} v =0$. This contradicts~(\ref{eqn16}) since $r - (i + j - k) \geq  r - j + 1$.

We now prove (b). In \cite[Section 2]{D}, Deligne constructed a splitting of the perverse filtration associated with $\pi: X \to Y$, which only depends on the choice of a relative ample class on $X$. Here we use Deligne's splitting with respect to $\eta$ and we follow closely the notation in \cite[Section 1.4.3]{dCHM1}.

For $d \geq 0$, $0 \leq i \leq r$, and $0\leq j \leq r-i$, Deligne's construction provides a splitting
\begin{equation}\label{Q_decomp}
H^d(X ,\BQ) = \bigoplus_{i,j} Q^{i,j;d}
\end{equation}
which satisfies the following properties:
\begin{enumerate}
    \item[(i)] $P_kH^d(X, \BQ) = \oplus_{i+2j\leq k} Q^{i,j;d}$;
    \item[(ii)] for $0\leq j < r-i$, we have
     \[
     \eta \, Q^{i,j;d} = Q^{i,j+1;d+2};
     \]
     \item[(iii)] \cite[Lemma 1.4.3]{dCHM1} for $v \in P_kH^d(X, \BQ)$, if $\eta^{r-k+1} v=0$, then we have 
     \[
     v\in Q^{k,0;d}.
     \]
\end{enumerate}

We compare our decomposition (\ref{V_decomp}) with Deligne's decomposition (\ref{Q_decomp}). First, by (a) and the property (iii) above, we have
\begin{equation*}
    \bigoplus_{2b + i+j =d} \beta^b \, V^{i,j} \subset  Q^{i,0;d}.
\end{equation*}
Hence for $0\leq a \leq r-i$, we obtain from (ii) that
\begin{equation}\label{inclusion}
\bigoplus_{2a+2b + i+j=d} \eta^a \beta^b \, V^{i,j} \subset Q^{i,a;d}.
\end{equation}
Finally, since
\[
\bigoplus_{i,j,d} \, \bigoplus_{2a+2b + i+j=d} \eta^a \beta^b \, V^{i,j} = H^\ast(X, \BQ) = \bigoplus_{i,a,d}Q^{i,a;d},
\]
every inclusion in (\ref{inclusion}) is an identity. Together with the property (i), this shows (b).

Part (c) is a direct consequence of (b).
\end{proof}

\begin{rmk}
In fact, our argument shows that under the hypothesis of Proposition \ref{prop1.2}, all three splittings in \cite{D} as well as the two additional splittings in \cite{dC} coincide; see \cite[Proposition 2.6.3]{dC}.
\end{rmk}

\begin{comment}
It follows from (a) that
\[
\bigoplus_{2a+i \leq k,\atop 2a+2b+i+j =d} \eta^a \beta^b \, V^{i,j} \subset P_kH^d(X, \BQ).
\]
Hence it suffices to show that for any $v \in P_k H^*(X, \BQ)$ we have
\end{comment}

\subsection{Examples}
We give several concrete examples which demonstrate the necessity of the holomorphic symplectic/Lagrangian condition and the perverse $t$-structure.

We consider $\pi: X \to Y$ to be a proper surjective morphism from a nonsingular surface to a nonsingular curve with connected fibers. In particular, the condition (\ref{dim_condition}) holds. Let $j: Y^\circ \hookrightarrow Y$ be the maximal subcurve of $Y$ such that the restricted map 
\[
\pi^\circ = \pi|_{Y^\circ}: X^\circ \rightarrow Y^\circ
\]
is smooth.

By \cite[Theorem 3.2.3]{dCM123}, the decomposition theorem (\ref{decomp}) for $\pi: X\to Y$ takes the form
\begin{multline}\label{decomp_surface}
    R\pi_\ast \BQ_X[1] \simeq \BQ_Y[1] \oplus \Big( j_\ast R^1\pi^\circ_\ast \BQ_{X^\circ}[1] \oplus \bigoplus_{y\in Y \setminus Y^\circ}\BQ_y^{n_y-1} \Big)[-1] \\  \oplus (\BQ_Y[1])[-2].
\end{multline}
Here $n_y$ denotes the number of irreducible components of the fiber over~$y$, and $\BQ_Y[1]$ and $j_\ast R^1\pi^\circ_\ast \BQ_{X^\circ}[1] \oplus \oplus_{y\in Y \setminus Y^\circ}\BQ_y^{n_y-1}$ are perverse sheaves on $Y$.

\subsubsection{$K3$ surfaces} \label{K3example}
Let $X$ be a $K3$ surface equipped with an elliptic fibration $\pi: X \to Y$. Then we have $Y = \BP^1$, and the formula (\ref{pervform}) together with the decomposition~(\ref{decomp_surface}) implies that 
\[
^\mathfrak{p}h^{0,0}(X) = {^\mathfrak{p}h}^{0,2}(X) = {^\mathfrak{p}h}^{2,0}(X) = {^\mathfrak{p}h}^{2,2}(X) =1.
\]
Since the perverse Leray filtration degenerates at the $E_2$-page, we find
\[
\sum_{i+j=k} {^\mathfrak{p}h}^{i,j}(X) = b_k(X)
\]
with $b_k(X)$ the $k$-th Betti number. Hence
\[
^\mathfrak{p}h^{0,1}(X) = {^\mathfrak{p}h}^{1,0}(X) = {^\mathfrak{p}h}^{1,2}(X) = {^\mathfrak{p}h}^{2,1}(X) =0,\quad ^\mathfrak{p}h^{1,1}(X) =20.
\]
In total, we have
\begin{equation}\label{eqn1}
^\mathfrak{p}h^{i,j}(X) = h^{i,j}(X)
\end{equation}
which verifies Theorem \ref{P=H} for elliptic $K3$ surfaces.

The following examples show that the equality (\ref{eqn1}) breaks down if we replace the $K3$ surface by a rational elliptic surface, or replace the perverse filtration by the naive Leray filtration.

\subsubsection{Elliptic surfaces}
We do a similar calculation for a rational elliptic surface $\pi: X \to \BP^1$. Here $X$ is obtained as the blow-up of $9$ points on $\BP^2$. The nontrivial perverse numbers are 
\[
^\mathfrak{p}h^{0,0}(X) = {^\mathfrak{p}h}^{0,2}(X) = {^\mathfrak{p}h}^{2,0}(X) = {^\mathfrak{p}h}^{2,2}(X) =1, \quad ^\mathfrak{p}h^{1,1}(X) =8.
\]
However, the nontrivial Hodge numbers of $X$ are given by
\[
h^{0,0}(X) = h^{2,2}(X)=1, \quad h^{1,1}(X) = 10.
\]
Hence the equality (\ref{eqn1}) does not hold for an arbitrary elliptic surface.

\subsubsection{Leray versus perverse}
\begin{comment}
Finally, we show that it is crucial to use the perverse $t$-structure to encode the topological information of $\pi: X \to Y$.
\end{comment}

Let $\tau_{\leq k}$ be the truncation functor associated with the standard $t$-structure on $D^b_c(Y)$. Then we define the Leray filtration
\[
L_k H^d(X, \BQ) = \mathrm{Im} \left\{ H^{d}(Y, \, \tau_{\le k}R\pi_\ast \BQ_X ) \rightarrow H^d(X, \BQ)\right\},
\]
and the corresponding numbers 
\[
^\mathfrak{l} h^{i,j} (X) =  \dim \mathrm{Gr}^L_i H^{i+j}(X, \BQ)= \dim \left( L_i H^{i+j}(X, \BQ)/ L_{i-1}  H^{i+j}(X, \BQ) \right).
\]
At first glance, the Leray filtration seems a natural object to encode the topology of the fibration $\pi: X \to Y$. However, the next example shows that the perverse filtration and the perverse numbers behave more nicely in view of Theorem \ref{P=H}.

As before, we consider a $K3$ surface with an elliptic fibration $\pi: X\to \BP^1$. By the decomposition (\ref{decomp_surface}), we find
\[
^\mathfrak{l} h^{2,0} (X) =h^0\Big(Y, \BQ_Y \oplus \bigoplus_{y\in Y\setminus Y^\circ} \BQ_y^{n_y-1}\Big)= 1 + \sum_{y\in Y\setminus Y^\circ} (n_y - 1).
\]
In particular, the numbers $^\mathfrak{l} h^{i,j} (X)$ are sensitive to the topology of the singular fibers of $\pi: X\to \BP^1$, and are not deformation invariant.

On the other hand, the calculation in Section \ref{K3example} shows that the perverse numbers of elliptic $K3$ surfaces are deformation invariant.\footnote{We refer to \cite{dCM} for more discussions about deformation invariance of the perverse numbers.} This matches Theorem \ref{P=H} since the Hodge numbers are deformation invariant.

\section{Hyper-K\"ahler manifolds} \label{sec2}

This section concerns the Hodge side. We first review Verbitsky's construction of an $\mathfrak{so}$-action on the cohomology of a compact hyper-K\"ahler manifold following \cite{LL, Ver90, Ver95, Ver96}. Then we study a certain variation of~$\mathfrak{so}(5)$-actions over the Zariski closure of the space of twistor lines. We shall see in Section~\ref{mainsec} that variations of $\mathfrak{so}(5)$-actions play the same role as hyper-K\"ahler rotations in the conjectural $P=W$ picture. Throughout this section, let $M$ be a compact hyper-K\"ahler manifold of complex dimension $2n$.

\subsection{The $\mathfrak{so}(5)$-action}
A hyper-K\"ahler manifold is a Riemannian manifold~$(M, g)$ which is K\"ahler with respect to three complex structures $I, J$, and $K$ satisfying the quaternion relations
\[
I^2 = J^2 = K^2 = I J K = -1.
\]
We view $(M,I)$ as a complex manifold which admits a hyper-K\"ahler structure, and we use $\omega_I, \omega_J$, and $\omega_K$ to denote the corresponding K\"ahler classes in $H^2(M ,\BC)$.

By the hard Lefschetz theorem, each K\"ahler class $\omega_\ast$ ($\ast \in \{I, J, K\}$) gives rise to a Lefschetz triple $(L_\ast, H, \Lambda_\ast)$ acting on the cohomology $H^\ast(M ,\BC)$,
\[
\rho_\ast: \mathfrak{sl}_2 \rightarrow \mathrm{End} (H^\ast(M ,\BC)).  
\]
Here
\begin{align*}
L_\ast & : H^i(M, \BC) \to H^{i+2}(M, \BC), \quad L_\ast(v) = \omega_* \cup v, \\
H & : H^i(M, \BC) \to H^i(M, \BC), \quad H(v) = (i - 2n)v,
\end{align*}
and the action
\[
\Lambda_\ast: H^{i}(M, \BC) \to H^{i - 2}(M, \BC)
\]
is uniquely determined by $L_\ast$ and $H$. We recall the main result of \cite{Ver90}.

\begin{thm}[Verbitsky \cite{Ver90}] \label{so5}
The three $\mathfrak{sl}_2$-actions $\rho_I, \rho_J$, and $\rho_K$ generate an $\mathfrak{so}(5)$-action on the cohomology $H^\ast(M, \BC)$, with
\[
\mathfrak{h}= \left\langle H, H' = -\sqrt{-1}[L_J, \Lambda_K] \right\rangle \subset \mathfrak{so}(5)
\]
a rank $2$ Cartan subalgebra. Moreover, the corresponding weight decomposition of $H^\ast(M, \BC)$ coincides with the Hodge decomposition of the K\"ahler manifold $(M, I)$.
\end{thm}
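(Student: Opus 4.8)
The plan is to show that the Lie subalgebra $\mathfrak{g}\subset\mathrm{End}\big(H^\ast(M,\BC)\big)$ generated by the three Lefschetz triples $\rho_I,\rho_J,\rho_K$ is isomorphic to $\mathfrak{so}(5,\BC)$, exhibit the asserted Cartan inside it, and then read off the weight decomposition. Since all three triples share the same semisimple element $H$ (which acts on $H^k(M,\BC)$ by the scalar $k-2n$), the operator $\mathrm{ad}(H)$ equips $\mathfrak{g}$ with a $\BZ$-grading; as each $L_\ast$ raises cohomological degree by $2$, each $\Lambda_\ast$ lowers it by $2$, and $H$ preserves it, the Jacobi identity forces $\mathfrak{g}=\mathfrak{g}_{-2}\oplus\mathfrak{g}_0\oplus\mathfrak{g}_2$ with $\mathfrak{g}_{2}\supseteq\langle L_I,L_J,L_K\rangle$ and $\mathfrak{g}_{-2}\supseteq\langle\Lambda_I,\Lambda_J,\Lambda_K\rangle$. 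It then suffices to identify the three graded pieces together with the brackets between them, and to compare with the analogous grading of $\mathfrak{so}(5,\BC)$.

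For $\mathfrak{g}_{\pm2}$ I would use the geometry of the twistor family: for every unit vector $(a,b,c)\in\BR^3$ the complex structure $aI+bJ+cK$ is again hyper-K\"ahler with K\"ahler class $a\omega_I+b\omega_J+c\omega_K$, so $L_{(a,b,c)}:=aL_I+bL_J+cL_K$ satisfies hard Lefschetz for all $(a,b,c)$ in an open cone of $V:=\langle\omega_I,\omega_J,\omega_K\rangle$. The structure theory of the Lie algebra generated by such a linear family of commuting Lefschetz operators with common grading (as in Looijenga--Lunts) then gives $\mathfrak{g}_2=\langle L_I,L_J,L_K\rangle$ and $\mathfrak{g}_{-2}=\langle\Lambda_I,\Lambda_J,\Lambda_K\rangle$, each of dimension $3$, and also pins down $\mathfrak{g}_0=[\mathfrak{g}_2,\mathfrak{g}_{-2}]$: it is spanned by $H$ and the off-diagonal operators $H_{\ast\star}:=[L_\ast,\Lambda_\star]$ ($\ast\neq\star$), and since $\omega_\ast,\omega_\star$ are Beauville--Bogomolov orthogonal one gets $H_{\ast\star}+H_{\star\ast}=0$, so $\dim\mathfrak{g}_0\le 4$. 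The bilinear form on $V$ sending $(v,w)$ to the $\BC H$-component of $[L_v,\Lambda_w]$ is, up to a positive scalar, the Beauville--Bogomolov form; it is positive-definite on $V$ because $\omega_I,\omega_J,\omega_K$ are $q$-orthogonal of equal positive norm, hence non-degenerate. Therefore $\mathfrak{g}_0=\BC H\oplus\mathfrak{s}$ with $\mathfrak{s}=\langle H_{IJ},H_{JK},H_{KI}\rangle\cong\mathfrak{so}(3,\BC)$ acting on $\mathfrak{g}_{\pm2}\cong\BC^3$ via the standard representation, $H$ being central in $\mathfrak{g}_0$. This is exactly the contact grading of $\mathfrak{so}(5,\BC)=\mathfrak{so}\big(V\oplus(\text{hyperbolic plane})\big)$ attached to a long-root $\mathfrak{sl}_2$, namely $\mathfrak{so}(5)=\BC^3\oplus(\BC\oplus\mathfrak{so}(3))\oplus\BC^3$ with matching dimensions $3+4+3=10$; comparing structure constants identifies $\mathfrak{g}$ with $\mathfrak{so}(5,\BC)$.

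For the ``moreover'' part, $H$ is central in $\mathfrak{g}_0$ and $H'=-\sqrt{-1}H_{JK}\in\mathfrak{s}$ is semisimple, so $\mathfrak{h}=\langle H,H'\rangle$ is a $2$-dimensional abelian subalgebra, hence a Cartan subalgebra of the rank-$2$ algebra $\mathfrak{so}(5,\BC)$. On $H^k(M,\BC)$ the element $H$ acts by $k-2n=p+q-2n$. For $H'$, observe that $[L_J,\Lambda_K]$ acts on $V$ as the infinitesimal rotation fixing $\omega_I$ and rotating $\omega_J\mapsto\omega_K\mapsto-\omega_J$; the corresponding circle in the twistor family acts on $H^\ast(M,\BC)$ through the Weil operator of $(M,I)$ (equivalently, $\omega_J\pm\sqrt{-1}\,\omega_K$ is the holomorphic, resp.\ anti-holomorphic, symplectic form of $(M,I)$, of Hodge type $(2,0)$ resp.\ $(0,2)$, and one tracks the bigrading shifts of the associated operators against the $\mathfrak{sl}_2$-weights of $\omega_J,\omega_K$). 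Either way, $H'$ acts on $H^{p,q}(M,I)$ by a fixed non-zero multiple of $p-q$. Since the pair $(p+q-2n,\,p-q)$ determines $(p,q)$, the joint weight spaces of $\mathfrak{h}$ are exactly the Hodge components $H^{p,q}(M,I)$, as claimed.

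The main obstacle is the second paragraph: proving that $\mathfrak{g}$ closes as $\mathfrak{g}_{-2}\oplus\mathfrak{g}_0\oplus\mathfrak{g}_2$ with $\dim\mathfrak{g}_{\pm2}=3$ and the stated degree-$0$ bracket. The difficulty is that $\Lambda_{(a,b,c)}$ is \emph{not} linear in $(a,b,c)$, so one cannot merely differentiate the hard-Lefschetz identities in the twistor directions; one needs either the abstract lemma on Lie algebras generated by a pencil of commuting Lefschetz operators sharing a grading, or a direct computation with the K\"ahler identities for a hyper-K\"ahler metric expressing $[\Lambda_I,L_J]$ in terms of an operator built from the third complex structure. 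Once those relations are in place — and once one checks that the induced quadratic form on $V$ is the non-degenerate Beauville--Bogomolov form rather than a degeneration — the rest is bookkeeping inside the graded Lie algebra $\mathfrak{so}(5,\BC)$.
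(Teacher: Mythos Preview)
The paper does not prove this theorem: it is quoted as Verbitsky's result \cite{Ver90}, with only a one-line clarification of the ``moreover'' part afterwards. So there is no in-paper proof to compare against; what the paper relies on is precisely the references \cite{Ver90} and \cite{LL} that your sketch is reconstructing.

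On its own terms your outline is the standard argument. The $\mathrm{ad}(H)$-grading $\mathfrak{g}=\mathfrak{g}_{-2}\oplus\mathfrak{g}_0\oplus\mathfrak{g}_2$, the dimension count $3+4+3=10$, the identification $\mathfrak{g}_0\cong\BC H\oplus\mathfrak{so}(3)$ acting on $\mathfrak{g}_{\pm2}\cong V$ in the standard way, and the match with $\mathfrak{so}(V\oplus U)$ for $U$ a hyperbolic plane are all correct. For the Cartan, your reading of $H'=-\sqrt{-1}[L_J,\Lambda_K]$ as the Weil operator of $(M,I)$ is exactly right; the paper's clarifying line records that $H'$ acts on $H^{p,q}(M)$ by $p-q$ (not merely a nonzero multiple, so the normalization does come out on the nose once the K\"ahler identities are written down).

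You have also correctly isolated the only genuine content: showing that $\mathfrak{g}_{-2}$ does not outgrow $\langle\Lambda_I,\Lambda_J,\Lambda_K\rangle$, equivalently that each $[L_\ast,\Lambda_\star]$ with $\ast\neq\star$ acts on $\mathfrak{g}_{\pm2}$ as an infinitesimal rotation of $V$ (so that repeatedly bracketing never produces new degree~$\pm2$ elements). Verbitsky's original note \cite{Ver90} does this by a direct K\"ahler-identity computation expressing, e.g., $[\Lambda_I,L_J]$ via the operator attached to the third complex structure $K$; the Looijenga--Lunts framework \cite{LL} abstracts this into the lemma you allude to. Either route closes the gap. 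One small terminological quibble: the $3+4+3$ grading you describe is not what is usually called the \emph{contact} grading of $\mathfrak{so}(5)$ (that one has five steps with $\dim\mathfrak{g}_{\pm2}=1$); it is rather the grading coming from the $\mathfrak{gl}(1)$ in the hyperbolic plane $U$ in $\mathfrak{so}(V\oplus U)$.
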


For the second statement, let 
\[
H^*(M, \BC) = \bigoplus_{i,j} H^{i,j}(M),
\]
be the Hodge decomposition of $M$. Then for any $v \in H^{i,j}(M)$ we have
\[
H(v) = (i+j-2n)v, \quad H'(v)= (i-j)v.
\]

\subsection{Larger $\mathfrak{so}$-actions}
The cohomology algebra $H^*(M, \BC)$ is a \emph{Frobenius--Lefschetz algebra} in the sense of \cite{LL}. Recall the scaling
\[H : H^i(M, \BC) \to H^i(M, \BC), \quad H(v) = (i - 2n)v.\]
An element $\omega \in H^2(M, \BC)$ is called of Lefschetz type if there is an associated Lefschetz triple $(L_\omega, H, \Lambda_\omega)$ acting on $H^\ast(M ,\BC)$,
\[\rho_\omega: \mathfrak{sl}_2 \rightarrow \mathrm{End} (H^\ast(M ,\BC)).\]
The structure Lie algebra $\mathfrak{g}(M) \subset \mathrm{End}(H^*(M, \BC))$ is defined to be the Lie subalgebra generated by $\rho_\omega$ for all $\omega \in H^2(M, \BC)$ of Lefschetz type.

The following theorem extends the $\mathfrak{so}(5)$-action to a larger $\mathfrak{so}$-action on the cohomology $H^\ast(M ,\BC)$.

\begin{thm}[Looijenga--Lunts {\cite[Section 4]{LL}}, Verbitsky \cite{Ver95, Ver96}] \label{largerso}
The structure Lie algebra $\mathfrak{g}(M)$ is isomorphic to $\mathfrak{so}(b_2(M) + 2)$, where $b_2(M)$ is the second Betti number of $M$.
\end{thm}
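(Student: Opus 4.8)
**Proof proposal for Theorem \ref{largerso} (Verbitsky's structure Lie algebra $\mathfrak{g}(M) \cong \mathfrak{so}(b_2(M)+2)$).**

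The plan is to analyze $\mathfrak{g}(M)$ through its action on the fundamental module $H^2(M, \BC)$ together with the grading operator $H$, and to pin down the Lie algebra by a combination of representation-theoretic constraints and the Beauville--Bogomolov quadratic form. First I would recall that for each $\omega \in H^2(M,\BC)$ of Lefschetz type, the triple $(L_\omega, H, \Lambda_\omega)$ is an $\mathfrak{sl}_2$-triple, and that the set of Lefschetz-type elements is Zariski dense in $H^2(M,\BC)$ (it contains an open cone around any Kähler class, by the hard Lefschetz theorem applied to small perturbations; here the compact hyper-Kähler hypothesis guarantees a plentiful supply via the twistor family, so that $\omega_I, \omega_J, \omega_K$ and their combinations are all of Lefschetz type). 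Consequently the raising operators $L_\omega$ span a subspace $\Fg^+ \subset \mathfrak{g}(M)$ isomorphic as a vector space to $H^2(M,\BC)$, sitting in degree $+2$ for the adjoint action of $H$; dually the lowering operators $\Lambda_\omega$ span $\Fg^- \cong H^2(M,\BC)$ in degree $-2$; and $H$ itself together with the commutators $[L_\omega, \Lambda_{\omega'}]$ live in degree $0$. So $\mathfrak{g}(M) = \Fg^- \oplus \Fg^0 \oplus \Fg^+$ is a $3$-term grading with $\dim \Fg^{\pm} = b_2(M)$.

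The key structural input is that $\mathfrak{g}(M)$ is semisimple: this follows because $H^*(M,\BC)$ is a faithful module on which the Killing form restricted to any $\mathfrak{sl}_2$ is nondegenerate, and more robustly from the Jacobson--Morozov / Frobenius--Lefschetz formalism of Looijenga--Lunts \cite{LL}, which shows that $\mathfrak{g}(M)$ is the reductive part and in fact semisimple because it is generated by $\mathfrak{sl}_2$-triples and acts irreducibly on no proper graded ideal. Granting semisimplicity, a $\BZ$-graded semisimple Lie algebra concentrated in degrees $-2, 0, +2$ with $\Fg^{\pm 2}$ an irreducible $\Fg^0$-module (irreducibility of $H^2(M,\BC)$ as a $\Fg^0$-module is itself something to check — it should follow since $\Fg^0$ contains, via the $\mathfrak{so}(5)$ of Theorem \ref{so5} and its translates, enough operators to act transitively, using that $H^2$ of a hyper-Kähler manifold has no nontrivial sub-Hodge-structures compatible with all hyper-Kähler rotations) is precisely of the form associated to a short root / a contact grading. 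Classifying such graded simple Lie algebras, the ones whose degree-$\pm2$ pieces have equal dimension $m$ and form a single irreducible are exactly $\mathfrak{so}(m+2)$ with the vector representation (the "$A_1 \times D_{k}$"-type Jacobson--Morozov grading attached to a minimal non-highest root), which gives $\mathfrak{g}(M) \cong \mathfrak{so}(b_2(M)+2)$. To nail down which real/complex form and to exhibit the invariant form explicitly, I would compute $[L_\omega, \Lambda_{\omega'}]$ and show that the symmetric bilinear form $q$ on $H^2(M,\BC)$ defined by the degree-zero part of these brackets (i.e. the "$\langle\omega,\omega'\rangle H + \ldots$" relation, where the scalar $\langle\omega,\omega'\rangle$ is read off from $\omega^{2n-1}\omega' $ against the fundamental class) coincides up to scale with the Beauville--Bogomolov--Fujiki form; the Fujiki relation $\int_M \alpha^{2n} = c\, q(\alpha)^n$ is what forces the bracket to close into $\mathfrak{so}(q) \oplus \BC H = \mathfrak{so}(b_2+2)$ rather than a larger algebra.

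The main obstacle I expect is verifying irreducibility of $H^2(M,\BC)$ as a module over the degree-zero subalgebra $\Fg^0$, equivalently ruling out that $\mathfrak{g}(M)$ decomposes as a product corresponding to a splitting of $H^2$. One must use genuinely global input: that the hyper-Kähler rotation action (the $\mathfrak{so}(5)$ of Theorem \ref{so5}, applied across the whole twistor family) mixes $H^{2,0} \oplus H^{1,1} \oplus H^{0,2}$ irreducibly, so no proper $\Fg^0$-submodule of $H^2$ can be invariant under all Lefschetz-type $\mathfrak{sl}_2$'s. A clean way to package this is: the span of all $[L_\omega, \Lambda_{\omega'}]$ acting on $H^2$ already contains $\mathfrak{so}(5)$-translates covering all of $\mathrm{End}(H^2)$ modulo the form $q$, hence $\Fg^0 \cong \mathfrak{so}(q) \oplus \BC$ acts as the full orthogonal algebra, which is irreducible on the vector representation for $b_2 + 2 \neq 4$ — and $b_2(M) \geq 3$ always for compact hyper-Kähler $M$, so the degenerate case does not arise. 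Once this is in place, the grading-classification argument and the Fujiki-form computation are essentially formal, and one concludes $\mathfrak{g}(M) \cong \mathfrak{so}(b_2(M)+2)$.
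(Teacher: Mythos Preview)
The paper does not supply its own proof of Theorem \ref{largerso}; it is quoted as a result of Verbitsky \cite{Ver95, Ver96}, with \cite[Section 4]{LL} cited as an alternative argument. The only indication of method in the paper is the remark opening Section 2.3 that the density statement of Proposition \ref{density} --- Zariski density of twistor triples $(\omega_I,\omega_J,\omega_K)$ in the quadric variety $D$ --- is ``the key to the proof'' of Theorem \ref{largerso}.

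Your proposal follows the Looijenga--Lunts route rather than Verbitsky's. Verbitsky's strategy, as the paper's hint suggests, is geometric: one first computes by direct linear algebra that the structure algebra of the minimal Frobenius--Lefschetz model $A(V)=\BC\mathbf{1}\oplus V\oplus\BC\Omega$, with $V=H^2(M,\BC)$ equipped with the Beauville--Bogomolov form, is $\mathfrak{so}(\dim V+2)$; then one shows the natural map $\mathfrak{g}(M)\to\mathfrak{g}(V)$ is an isomorphism, using the density of genuine hyper-K\"ahler triples to propagate the commutator relations coming from Theorem \ref{so5}. Your approach instead trades this density input for a structure-theoretic classification of short $\BZ$-graded semisimple Lie algebras, with the Fujiki relation replacing the twistor geometry as the source of the Beauville--Bogomolov form. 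Both are valid and both appear in the literature; yours is closer to \cite{LL}.

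One technical caution: you assert $\dim\Fg^{-}=b_2(M)$ at the outset, but the assignment $\omega\mapsto\Lambda_\omega$ is defined only on the Lefschetz locus and by a nonlinear condition, so its linear extension to all of $H^2(M,\BC)$ is not automatic. In the Looijenga--Lunts argument this comes out \emph{after} one computes $[\Lambda_\omega,L_{\omega'}]$ and $[\Lambda_\omega,\Lambda_{\omega'}]$ explicitly, not before; likewise your identification $\Fg^0\cong\mathfrak{so}(q)\oplus\BC H$ is really the conclusion rather than an input to the irreducibility step. These are orderable issues, not fatal gaps, but the write-up should reflect the correct logical dependence.
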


\subsection{Twistor lines}
As is explained in \cite[Introduction]{Ver95}, the key to the proof of Theorem \ref{largerso} is the following observation.

Let $q_M(-)$ denote the Beauville--Bogomolov quadratic form on the second cohomology $H^2(M, \BC)$, and let $(-,-)_M$ be the corresponding bilinear form~\cite{B}. Given a hyper-K\"ahler structure $(g, I, J, K)$ on $M$, the three K\"ahler classes 
$\omega_I, \omega_J$, and $\omega_K$ satisfy
\begin{equation} \label{quadrel}
\begin{gathered}
q_M(\omega_I) = q_M(\omega_J) = q_M(\omega_K), \\
(\omega_I, \omega_J)_M = (\omega_J, \omega_K)_M = (\omega_K, \omega_I)_M = 0.
\end{gathered}
\end{equation}

Let $\mathrm{Hyp}$ be the classifying space of the hyper-K\"ahler structures on $M$. We consider the map
\begin{equation} \label{periodhyp}
\mathrm{Hyp} \to H^2(M,\BC)\times H^2(M,\BC) \times H^2(M,\BC)
\end{equation}
which sends a hyper-K\"ahler structure $(g, I, J, K)$ to the triple $(\omega_I, \omega_J, \omega_K)$. By \eqref{quadrel}, the image of \eqref{periodhyp} lies in the quasi-affine variety
\begin{equation} \label{variety_D}
D^\circ \subset H^2(M,\BC)\times H^2(M,\BC) \times H^2(M,\BC)
\end{equation}
defined by
\begin{equation*}
\Big\{(x, y, z) : q_M(x) = q_M(y) = q_M(z) \neq 0, \, (x, y)_M = (y, z)_M = (z, x)_M = 0\Big\}.
\end{equation*}

The key observation is a density statement, which is a consequence of the Calabi--Yau theorem and local Torelli.

\begin{prop} \label{density}
The image of \eqref{periodhyp} is Zariski-dense in~$D^\circ$. In particular, all algebraic relations which hold for triples $(\omega_I, \omega_J, \omega_K)$ still hold for all~$(x, y, z) \in D^\circ$ where the relations are defined.
\end{prop}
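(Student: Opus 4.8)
The statement to be proved is Proposition \ref{density}: the image of the period map \eqref{periodhyp} is Zariski-dense in the incidence variety $D$ cut out by the Beauville--Bogomolov equations $q_M(x)=q_M(y)=q_M(z)$, $(x,y)_M=(y,z)_M=(z,x)_M=0$.

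The plan is as follows.

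First, note the strategy: it suffices to produce \emph{enough} hyper-Kähler triples $(\omega_I,\omega_J,\omega_K)$ so that their Zariski closure has the same dimension as (each component of) $D$, and is irreducible enough to fill up a component. So I would begin by computing $\dim D$: since $D \subset (H^2)^{\oplus 3}$ is cut out by one equation $q_M(x)=q_M(y)$, one equation $q_M(y)=q_M(z)$, and three bilinear equations, and since one checks these $5$ equations are independent at a generic smooth point, $D$ has codimension $5$, i.e. $\dim D = 3b_2 - 5$ where $b_2 = b_2(M)$. (One should also remark that $D$ contains the "diagonal-type" loci where $q_M$ vanishes, but the relevant component is the one dominating $\{q_M(x)\neq 0\}$.)

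Second, I would recall the two inputs. (i) The local Torelli theorem (Beauville, Bogomolov): the period domain of Kähler-type (or hyper-Kähler-type) classes is open in the quadric $\{q_M = \text{const} > 0\}$ in $H^{1,1}$ of a given complex structure, and more to the point, the space of complex structures $I$ admitting a hyper-Kähler metric, together with the corresponding Kähler cone, is "as large as possible": the periods $\omega_I \in H^2(M,\BR)$ of such $I$ (for $I$ ranging over all deformations) sweep out an open subset of the real quadric $\{q_M(\omega)>0\}$ inside $H^2(M,\BR)$, by the surjectivity of the period map and the Calabi--Yau theorem (existence of a Ricci-flat Kähler metric in every Kähler class). (ii) Given a complex structure $I$ with a Ricci-flat Kähler form $\omega_I$ (Yau), the holomorphic symplectic form $\sigma_I = \omega_J + \sqrt{-1}\,\omega_K$ determines $\omega_J$ and $\omega_K$, and the pair $(\omega_J,\omega_K)$ ranges, as we rotate the hyper-Kähler structure fixing $\omega_I$ (equivalently as $\sigma_I$ ranges over the allowed periods with the correct normalization relative to $\omega_I$), over an open subset of the real locus $\{(y,z) : q_M(y)=q_M(z)=q_M(\omega_I),\ (y,z)_M = (\omega_I,y)_M=(\omega_I,z)_M=0\}$. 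Combining (i) and (ii): the image of \eqref{periodhyp} contains an open (in the classical topology) subset $U$ of the \emph{real} incidence variety $D_\BR = D \cap (H^2(M,\BR))^{\oplus 3}$.

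Third, the final step is the passage from "contains a classical-open subset of the real points of $D$" to "Zariski-dense in $D$". Here I would argue: $D_\BR$ is a real algebraic variety whose (smooth part of the) relevant component has real dimension $3b_2 - 5$, equal to the complex dimension of the corresponding component of $D$. A classical-open subset $U$ of the smooth real points of an irreducible real algebraic set is Zariski-dense in that set (the vanishing locus of any polynomial vanishing on $U$ would be a proper real-analytic subset, hence cannot contain an open ball); hence the Zariski closure of $U$ over $\BR$, then base-changed to $\BC$, is the whole component of $D$ containing it. Finally one checks $D$ (the relevant locus) is irreducible — this follows because $\mathrm{SO}(H^2, q_M)$ acts transitively, over $\BC$, on the set of such triples with $q_M(x)$ fixed and nonzero (three mutually orthogonal vectors of equal nonzero square), and scaling handles the value of $q_M(x)$, so the non-degenerate locus of $D$ is a single orbit, hence irreducible; its closure is all we need. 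This gives Zariski-density of the image of \eqref{periodhyp} in $D$, and the "in particular" clause follows immediately: a polynomial identity valid on a Zariski-dense subset of $D$ holds on all of $D$ (on the locus where it is defined).

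The main obstacle is the middle step: making precise that the periods of hyper-Kähler metrics genuinely fill an open subset of $D_\BR$, rather than some lower-dimensional subset. This is exactly where one must invoke, carefully, both the surjectivity of the period map for the complex structures (so that $\omega_I$ is unconstrained in the real positive-square cone, up to the global Torelli subtleties which do not affect an \emph{open} statement) and Yau's theorem (so that every Kähler class is realized by a genuine hyper-Kähler metric, pinning down $\omega_J,\omega_K$ up to the residual $S^2$ of rotations). Once one grants that the classical-topology image is open in $D_\BR$, the algebraic-geometry conclusion (open real $\Rightarrow$ Zariski-dense, plus irreducibility of the non-degenerate locus of $D$ via the orthogonal group action) is routine. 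I would present the dimension count and the orthogonal-group transitivity first, since they identify the target, and then state the differential-geometric input as the crux, citing \cite{B} and the Calabi--Yau theorem as in the surrounding text.
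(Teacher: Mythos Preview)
Your proposal is correct and follows the same two-step strategy as the paper: first, the image of \eqref{periodhyp} is Zariski-dense in the real locus $D(\BR)$ (the paper simply cites \cite{B, Ver95} for this, whereas you sketch the reasoning via Calabi--Yau and local Torelli); second, $D(\BR)$ is Zariski-dense in $D$.

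The execution of the second step differs. You argue by establishing irreducibility of the non-degenerate locus $D^\circ$ via the transitive action of $\mathrm{SO}(H^2,q_M)$ (together with scaling), and then invoke the general principle that a classical-open set of smooth real points of an irreducible complex variety defined over $\BR$ is Zariski-dense. The paper instead considers the rational map
\[
f: D \dashrightarrow \mathrm{Gr}(3, b_2(M)), \quad (x,y,z) \mapsto \langle x, y, z\rangle,
\]
observes that over each real $3$-plane on which $q_M$ is definite the fiber has Zariski-dense real points, and uses that definite $3$-planes are Zariski-dense in the Grassmannian. The paper's fibration argument has the advantage of sidestepping any separate verification that $D$ is irreducible; your approach requires that check (and a word of care when $b_2(M)$ is small) but is otherwise more direct once the group action is in hand. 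Both routes are short and valid.
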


\begin{proof}
It was shown in \cite[Theorem 1.5]{Ver95} that the image of (\ref{periodhyp}) is Zariski-dense in the real points $D^\circ(\BR)$ of the complex algebraic variety $D^\circ$. Hence it suffices to verify that $D^\circ(\BR)$ is Zariski-dense in $D^\circ$.

We consider the dominant map
\[
f: D^\circ \rightarrow \mathrm{Gr}(3, b_2(M))  
\]
sending a triple $(x,y,z)\in D^\circ$ to the linear subspace $\langle x, y, z \rangle \subset H^2(M, \BC)$.~If
\[
\langle x, y, z \rangle \in \mathrm{Gr}(3, b_2(M))(\mathbb{R})\]
and the 
restriction of $q_M(-)$ to $\langle x, y, z \rangle$ is positive definite, then the fiber of $f$ over $\langle x, y, z \rangle$ contains a real point and admits a transitive action of $O(3) \times \mathbb{G}_m$ defined over $\mathbb{R}$. In particular, the real points are Zariski-dense in the fiber of~$f$ over a positive definite $3$-space. On the other hand, the positive definite~$3$-spaces are Zariski-dense in~$\mathrm{Gr}(3, b_2(M))(\mathbb{R})$, hence in $\mathrm{Gr}(3, b_2(M))$.

Therefore, the Zariski closure $\overline{D^\circ(\BR)}$ of $D^\circ(\BR)$ contains the fibers of $f$ over a Zariski-dense subset of the base $\mathrm{Gr}(3, b_2(M))$. This shows that~$\overline{D^\circ(\BR)}$ dominates~$\mathrm{Gr}(3, b_2(M))$ with general fibers of the same dimension as the fiber of $f$ over a positive definite $3$-space. Since $D^\circ$ is irreducible, a dimension count yields $\overline{D^\circ(\BR)} = D^\circ$.
\end{proof}

\begin{rmk}
We shall see in the proof of Theorem \ref{full_P=W} that the complex points of $D^\circ$ play an essential role in capturing the perverse filtration associated with a Lagrangian fibration. It is therefore crucial to work over the total complex variety $D^\circ$ rather than the twistor lines lying in $D^\circ(\BR)$.
\end{rmk}

\subsection{Variations of $\mathfrak{so}$-actions} We construct a family of $\mathfrak{so}(5)$-actions on the cohomology $H^*(M, \BC)$.

Let $\omega \in H^2(M, \BC)$. Since the elements of Lefschetz type are Zariski-dense in $H^2(M, \BC)$, the operator
\[L_\omega  : H^i(M, \BC) \to H^{i+2}(M, \BC), \quad L_\omega(v) = \omega \cup v\]
belongs to the structure Lie algebra $\mathfrak{g}(M)$. 

\begin{lem}\label{lem2.5}
An element $\omega \in H^2(M ,\BC)$ with $q_M(\omega) \neq 0$ is of Lefschetz~type.
\end{lem}

\begin{proof}
This is a consequence of Verbitsky's theorem \cite[Theorem 11.1]{Ver95}. More precisely, as a key step in the proof of Theorem \ref{largerso}, Verbitsky constructed a canonical Lie algebra isomorphism between~$\mathfrak{g}(M)$ and the Lie algebra $\mathfrak{g}(H^2(M, \BC))$. Here $\mathfrak{g}(H^2(M, \BC))$ is the structure Lie algebra of a ``virtual'' regular surface whose cohomology ring is given by
\[\langle \mathbf{1}\rangle \oplus H^2(M, \BC) \oplus \langle \Omega\rangle,\]
with $v \cup v' = (v, v')_M\Omega$ for $v, v' \in H^2(M, \BC)$. The isomorphism preserves the element $H$ acting as the scaling operator on the cohomology ring of each side; see \cite[Section 12]{Ver95}. The fact that $\omega$ is of Lefschetz type for~$q_M(\omega) \neq 0$ is verified on the $\mathfrak{g}(H^2(M, \BC))$ side, which is \cite[Lemma~9.1]{Ver95}.
\end{proof}

%To prove the lemma, it suffices to show that when~$q_M(\omega) \neq 0$, the degree 0 element $H_\omega$ of the $\mathfrak{sl}_2$-triple 
%\[
%(L_\omega, H_\omega, \Lambda_\omega) \subset \mathfrak{g}(M)
%\]
%generated by $L_\omega$ coincides with the degree 0 element $H$ associated with the $\mathfrak{sl}_2$-triple induced by a K\"ahler class. Through Verbitsky's isomorphism \cite[Theorem 11.1]{Ver95}, this follows from \cite[Lemma 9.1]{Ver95} on the $\mathfrak{g}(H^2(M, \BC))$ side.

For $\omega \in H^2(M, \BC)$ with $q_M(\omega) \neq 0$, we denote by $(L_\omega, H, \Lambda_\omega)$ the associated Lefschetz triple and by
\[\rho_\omega: \mathfrak{sl}_2 \rightarrow \mathrm{End} (H^\ast(M ,\BC))\]
the corresponding $\mathfrak{sl}_2$-action.

%Moreover, it was proven in \cite[Lemma 9.1 and Theorem 11.1]{Ver95} that if $q_M(\omega) \neq 0$, then $\omega$ is of Lefschetz type.\footnote{More precisely, Verbitsky proved Theorem \ref{largerso} by constructing a canonical Lie algebra isomorphism between~$\mathfrak{g}(M)$ and the Lie algebra $\mathfrak{g}(H^2(M, \BC))$. Here $\mathfrak{g}(H^2(M, \BC))$ is the structure Lie algebra of a ``virtual'' regular surface whose cohomology ring is given by
%\[\langle \mathbf{1}\rangle \oplus H^2(M, \BC) \oplus \langle \Omega\rangle,\]
%with $v \cup v' = (v, v')_M\Omega$ for $v, v' \in H^2(M, \BC)$. The fact that $\omega$ is of Lefschetz type for~$q_M(\omega) \neq 0$ is verified on the $\mathfrak{g}(H^2(M, \BC))$ side.} In this case, we write $(L_\omega, H, \Lambda_\omega)$ for the Lefschetz triple and
%\[\rho_\omega: \mathfrak{sl}_2 \rightarrow \mathrm{End} (H^\ast(M ,\BC)).\]
%for the corresponding action.

%by Theorem \ref{largerso} and the Jacobson--Morozov theorem, we know that $L_\omega$ is part of an $\mathfrak{sl}_2 \subset \mathfrak{g}(M)$.

%On the other hand, the self-intersection number $\int_M \omega^{2n}$ is a nonzero multiple (called the Fujiki constant \cite{Fuj}) of $q_M(\omega)^n \neq 0$. In particular, we have
%\[
%L_\omega^{2n} \neq 0, \quad L_{\omega}^{2n+1}=0.
%\]
%By the classification of $\mathfrak{sl_2}$-representations, the scaling in this $\mathfrak{sl}_2$ is
%\[H : H^i(M, \BC) \to H^i(M, \BC), \quad H(v) = (i - 2n)v.\]
%Hence $\omega \in H^2(M, \BC)$ is of Lefschetz type.

Then, given a point $(x, y, z) \in D^\circ$, each $* \in \{x, y, z\}$ induces a Lefschetz triple $(L_*, H, \Lambda_*)$ and an $\mathfrak{sl}_2$-action $\rho_*$ on $H^*(M, \BC)$.

\begin{cor} \label{variant}
For every $p = (x, y, z) \in D^\circ$, the three $\mathfrak{sl}_2$-actions $\rho_x, \rho_y$, and $\rho_z$ generate an $\mathfrak{so}(5)$-action
\[\iota_p : \mathfrak{so}(5) \to \mathrm{End}(H^*(M, \BC)).\]
\end{cor}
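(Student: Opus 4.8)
The plan is to deduce Corollary \ref{variant} from Verbitsky's Theorem \ref{so5} by a Zariski-density argument, using Proposition \ref{density}. The point is that the statement ``$\rho_x, \rho_y, \rho_z$ generate an $\mathfrak{so}(5)$-action'' is, at bottom, a finite collection of polynomial identities among the structure constants of the triples $(L_*, H, \Lambda_*)$ for $* \in \{x,y,z\}$, and these identities are already known by Theorem \ref{so5} to hold on the dense subset of $D^\circ(\BR)$ coming from honest hyper-K\"ahler structures. So the first step is to make precise that the assignment $(x,y,z) \mapsto (L_x, \Lambda_x, L_y, \Lambda_y, L_z, \Lambda_z)$ is \emph{algebraic} on $D^\circ$. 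The operators $L_*$ are manifestly linear in $*$, hence algebraic; the subtlety is $\Lambda_*$. For a single Lefschetz triple, $\Lambda_\omega$ is characterized as the unique operator making $(L_\omega, H, \Lambda_\omega)$ an $\mathfrak{sl}_2$-triple, and on each fixed graded piece it is given by a universal polynomial (with rational coefficients) in $L_\omega$ and $H$ via the Lefschetz decomposition — equivalently, by Clebsch--Gordan, $\Lambda_\omega$ restricted to the primitive-times-$L_\omega^k$ part is a rational scalar multiple of $L_\omega^{k-1}$. Since $\omega \mapsto L_\omega$ is algebraic and ``$\omega$ is of Lefschetz type'' holds on all of $D^\circ$ (this is exactly the content of the paragraph preceding the Corollary: $q_M(\omega) \ne 0$ forces the correct scaling $H$, via the nonvanishing of the Fujiki constant), the operator $\Lambda_*$ depends algebraically on $* \in D^\circ$ as well. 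Thus all brackets $[L_x, \Lambda_y]$, $[\Lambda_x, L_z]$, etc., are regular functions of $p \in D^\circ$ valued in $\mathrm{End}(H^*(M,\BC))$.

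The second step is to identify the finitely many polynomial relations that express ``the Lie subalgebra of $\mathrm{End}(H^*(M,\BC))$ generated by $\rho_x, \rho_y, \rho_z$ is a quotient of $\mathfrak{so}(5)$, with the Jacobson--Morozov generators mapping to the $\rho_*$''. Concretely, $\mathfrak{so}(5)$ is generated by three $\mathfrak{sl}_2$-triples $e_I, f_I, h; e_J, f_J, h; e_K, f_K, h$ sharing the Cartan element $h$, subject to a fixed finite list of Lie-algebra relations (commutators of $e$'s and $f$'s across different indices, plus the Serre-type relations). Asking that $e_* \mapsto L_*$, $f_* \mapsto \Lambda_*$, $h \mapsto H$ define a Lie algebra homomorphism $\mathfrak{so}(5) \to \mathrm{End}(H^*(M,\BC))$ is the vanishing of finitely many $\mathrm{End}(H^*(M,\BC))$-valued regular functions $R_1(p), \dots, R_m(p)$ on $D^\circ$. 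By Theorem \ref{so5}, each $R_\ell$ vanishes on the image of $\mathrm{Hyp}$ inside $D^\circ$; by Proposition \ref{density} that image is Zariski-dense in $D$, hence in $D^\circ$; therefore $R_\ell \equiv 0$ on $D^\circ$. This gives, for every $p \in D^\circ$, a homomorphism $\iota_p : \mathfrak{so}(5) \to \mathrm{End}(H^*(M,\BC))$ whose image is the Lie algebra generated by $\rho_x, \rho_y, \rho_z$. Finally, $\iota_p$ is injective (equivalently, the three triples generate a copy of $\mathfrak{so}(5)$ and not a proper quotient) because $\mathfrak{so}(5)$ is simple: a nonzero Lie algebra homomorphism from a simple Lie algebra is automatically injective, and $\iota_p \ne 0$ since $L_x \ne 0$ (as $q_M(x) \ne 0$ implies $L_x$ acts nontrivially, e.g.\ it is part of a nontrivial $\mathfrak{sl}_2$).

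I expect the main obstacle to be the bookkeeping in the first step: namely, checking carefully that $\Lambda_*$ really is a \emph{rational} (not just real-analytic) function of the point, and that no degeneration occurs over $D^\circ$ that would spoil the Lefschetz property or change the grading element $H$. The cleanest way to handle this is to fix, once and for all, the decomposition of $H^*(M,\BC)$ into isotypic pieces under the grading operator $H$ and to write $\Lambda_*$ block by block using the explicit $\mathfrak{sl}_2$ formula $\Lambda = \sum_k c_k\, (\text{projection onto } L_*^k \cdot \text{primitives}) \circ (\text{some polynomial in } L_*)$ with universal constants $c_k \in \BQ$; regularity is then visible termwise once one knows $L_*$ is regular and of Lefschetz type throughout $D^\circ$, which the preceding paragraph of the paper supplies. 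Everything else — the density transfer of polynomial identities, and simplicity of $\mathfrak{so}(5)$ forcing injectivity — is formal.
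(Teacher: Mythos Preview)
Your proposal is correct and follows essentially the same approach as the paper's proof: the paper also argues that the $\mathfrak{so}(5)$-relations among $L_*$, $H$, $\Lambda_*$ (computed explicitly in Verbitsky's work) are algebraic conditions on $D^\circ$, and then invokes the density statement of Proposition~\ref{density} to propagate them from the hyper-K\"ahler locus to all of $D^\circ$. Your version is more explicit about why $\Lambda_*$ is regular on $D^\circ$ and adds the (correct, though not strictly needed for the paper's downstream use) observation that simplicity of $\mathfrak{so}(5)$ forces $\iota_p$ to be injective; the paper compresses all of this into three sentences.
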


\begin{proof}
This is a direct consequence of Theorem \ref{so5} and Proposition \ref{density}. In fact, the $\mathfrak{so}(5)$-relations among $L_*$, $H$, and $\Lambda_*$ ($* \in \{I, J, K\}$) have been calculated explicitly in \cite[(2.1)]{Ver90}. Now since $L_*$ and $\Lambda_*$ ($* \in \{x, y, z\}$) exist and are algebraic over $D^\circ$, Proposition \ref{density} implies the same relations among~$L_*$, $H$, and $\Lambda_*$ ($* \in \{x, y, z\}$) for all $(x, y, z) \in D^\circ$.
\end{proof}

A Cartan subalgebra of the $\mathfrak{so}(5)$ in Corollary \ref{variant} is given by
\[
\mathfrak{h}= \left\langle H, H'= -\sqrt{-1}[L_y, \Lambda_z] \right\rangle \subset \mathfrak{so}(5).
\]
There is the associated weight decomposition
\[
H^\ast(M, \BC) = \bigoplus_{i,j} H^{i,j}(\iota_p)
\]
such that for any $v \in H^{i,j}(\iota_p)$, we have
\[
H(v) = (i+j-2n)v, \quad H'(v)= (i-j)v.
\]

By Whitehead's first lemma \cite[III.10, Theorem 13]{J} together with \cite[Theorem A]{NR}, finite-dimensional representations of semisimple Lie algebras are rigid. Since the $\mathfrak{so}(5)$-actions vary algebraically over~$D^\circ$, the dimensions of the weight spaces $H^{i, j}(\iota_p)$ are conserved.

\begin{cor} \label{conserve}
For every pair $p, p' \in D^\circ$, we have
\begin{equation*} \label{conseq}
\dim H^{i,j}(\iota_p) = \dim H^{i,j}(\iota_{p'})
\end{equation*}
for all $i, j$.
\end{cor}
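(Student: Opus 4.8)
The plan is to deduce this from the rigidity of finite-dimensional representations of $\mathfrak{so}(5)$ together with the fact, established in Corollary \ref{variant}, that the $\mathfrak{so}(5)$-actions $\iota_p$ vary algebraically in $p \in D^\circ$. First I would make precise the sense in which $p \mapsto \iota_p$ is a family: the operators $L_*$ for $* \in \{x,y,z\}$ are manifestly regular functions of $p = (x,y,z) \in D^\circ$ with values in $\mathrm{End}(H^*(M,\BC))$ (cup product is linear in the class), and the adjoint operators $\Lambda_*$ are determined by $L_*$ and the fixed scaling $H$ via the $\mathfrak{sl}_2$-relations; since $q_M(*) \neq 0$ on $D^\circ$ one checks that $\Lambda_*$ is also a regular function of $p$ (it can be written as a universal polynomial in $L_*$ and $H$ divided by a power of $q_M(*)$, using that $H^*(M,\BC)$ has bounded degree). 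Hence the six generators of $\mathfrak{so}(5)$, and therefore the whole map $\iota_p : \mathfrak{so}(5) \to \mathrm{End}(H^*(M,\BC))$, depend regularly — in particular continuously — on $p$.

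Next I would invoke rigidity. Since $D^\circ$ is an irreducible variety (it is a dense open subset of the irreducible quadric-type variety $D$; alternatively one may restrict to the connected component containing a given $p$ and argue there), the family of representations $\{\iota_p\}_{p \in D^\circ}$ is a continuous — indeed algebraic — family of finite-dimensional representations of the semisimple Lie algebra $\mathfrak{so}(5)$ on the fixed vector space $H^*(M,\BC)$. By the Whitehead lemma, $H^1(\mathfrak{so}(5), \mathrm{End}(H^*(M,\BC))) = 0$, so every such representation is infinitesimally rigid; a standard argument (e.g.\ via the fact that the locus of representations isomorphic to a fixed one is open, or via a connectedness/integration argument using $H^1 = 0$) shows that $\iota_p$ and $\iota_{p'}$ are isomorphic as $\mathfrak{so}(5)$-representations for all $p, p' \in D^\circ$. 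Concretely, the function $p \mapsto \dim H^{i,j}(\iota_p)$ is the multiplicity of a given weight in $\iota_p$, and these multiplicities are locally constant on the space of representations by rigidity, hence constant on the irreducible $D^\circ$.

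Finally I would conclude: $\dim H^{i,j}(\iota_p)$ is the dimension of the weight space of the chosen Cartan $\mathfrak{h} = \langle H, H' \rangle$ with weight $(i+j-2n,\, i-j)$, which is an invariant of the isomorphism class of the representation $\iota_p$ of $\mathfrak{so}(5)$ (the Cartan $\mathfrak{h}$ itself varies, but all Cartans are conjugate, so the weight multiplicities of a fixed representation are well defined). Since $\iota_p \cong \iota_{p'}$ for all $p, p' \in D^\circ$, we get $\dim H^{i,j}(\iota_p) = \dim H^{i,j}(\iota_{p'})$ for all $i, j$, as claimed.

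The main obstacle I anticipate is the bookkeeping in the first paragraph: verifying carefully that $\Lambda_*$, and hence the whole $\mathfrak{so}(5)$-action, genuinely extends to a regular (not merely fiberwise-defined) map on $D^\circ$, so that rigidity in families applies. Once algebraicity of the family is in hand, the rest is a formal application of representation-theoretic rigidity.
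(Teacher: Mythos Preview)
Your proposal is correct and follows exactly the paper's approach: the paper states that by the Whitehead lemma finite-dimensional representations of semisimple Lie algebras are rigid, and since the $\mathfrak{so}(5)$-actions vary algebraically over $D^\circ$ the weight-space dimensions are conserved. Your write-up simply fleshes out the two points the paper leaves implicit (regularity of $\Lambda_\ast$ over $D^\circ$ and connectedness/irreducibility of $D^\circ$), so there is nothing to add.
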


\section{``Perverse \texorpdfstring{$=$}{=} Hodge'' and applications} \label{mainsec}
Combining the structures in Sections \ref{sec1} and \ref{sec2}, we prove a compact analogue of the $P=W$ conjecture; see Theorem \ref{full_P=W}. Theorem \ref{P=H} then follows as a corollary. Finally, we show that Theorem \ref{P=H} implies Theorem~\ref{thm0.3}. From now on, let $M$ be a projective irreducible holomorphic symplectic variety of dimension $2n$ equipped with a Lagrangian fibration $\pi: M \to B$. 

\subsection{``Perverse $=$ Hodge''}

We first view $M$ as a compact hyper-K\"ahler manifold. Recall the subvariety
\[D^\circ \subset H^2(M,\BC)\times H^2(M,\BC) \times H^2(M,\BC)\]
given in (\ref{variety_D}). By Corollary \ref{variant}, there is an $\mathfrak{so}(5)$-action 
\[
\iota_p: \mathfrak{so}(5) \to \mathrm{End}(H^\ast(M, \BC))
\]
attached to every point $p \in D^\circ$.

We now construct an $\mathfrak{sl}_2 \times \mathfrak{sl}_2$-action from $\iota_p$ for any $p\in D^\circ$ as follows. Let $(x,y,z)$ be the coordinates of $p$. We consider the elements
\[
\sigma_p = \frac{1}{2}(y+\sqrt{-1}z) \in H^2(M, \BC),\quad \bar{\sigma}_p= \frac{1}{2}(y- \sqrt{-1} z) \in H^2(M,\BC).
\]
They satisfy
\[
q_M(\sigma_p)=q_M(\bar{\sigma}_p) =0, \quad (\sigma_p, \bar{\sigma}_p)_M \neq 0.
\]
Note that if $p= (\omega_I, \omega_J, \omega_K) \in D^\circ$, then $\sigma_p$ is precisely the class of the holomorphic symplectic form on $M$.

Let $(L_\ast, H, \Lambda_\ast)$ be the Lefschetz triple associated with $\ast \in \{x, y, z\}$.
Then
\[
L_{\sigma_p} = \frac{1}{2}(L_y +\sqrt{-1} L_z), \quad \Lambda_{\sigma_p} =\frac{1}{2}( \Lambda_y - \sqrt{-1} \Lambda_z), \quad H_{\sigma_p} = [L_{\sigma_p}, \Lambda_{\sigma_p}]
\]
form an $\mathfrak{sl}_2$-triple $(L_{\sigma_p}, H_{\sigma_p}, \Lambda_{\sigma_p}) \subset \mathfrak{so}(5)$. Similarly, we define the $\mathfrak{sl}_2$-triple associated with $\bar{\sigma}_p$ by
\[
L_{\bar{\sigma}_p} = \frac{1}{2}(L_y -\sqrt{-1} L_z), \quad \Lambda_{\bar{\sigma}_p} = \frac{1}{2}( \Lambda_y + \sqrt{-1} \Lambda_z), \quad H_{\bar{\sigma}_p} = [L_{\bar{\sigma}_p}, \Lambda_{\bar{\sigma}_p}].
\]

Then $(L_{\sigma_p}, H_{\sigma_p}, \Lambda_{\sigma_p})$ and $(L_{\bar{\sigma}_p}, H_{\bar{\sigma}_p}, \Lambda_{\bar{\sigma}_p})$ induce an $\mathfrak{sl}_2 \times \mathfrak{sl}_2$-action associated with the $\mathfrak{so}(5)$-action $\iota_p$. The connection between the $\mathfrak{sl}_2 \times \mathfrak{sl}_2$-action and the $\mathfrak{so}(5)$-action is the following: a vector $v \in H^\ast(M, \BC)$ has weight
\[
H(v) = (i+j-2n)v, \quad H'(v)=(i-j)v
\]
with respect to the Cartan subalgebra
\[
\mathfrak{h}= \left\langle H, H'= -\sqrt{-1}[L_y, \Lambda_z] \right\rangle \subset \mathfrak{so}(5)
\]
if and only if it has weight
\[
H_{\sigma_p}(v) = (i-n)v, \quad H_{\bar{\sigma}_p}(v) = (j-n)v
\]
under the $\mathfrak{sl}_2 \times \mathfrak{sl}_2$-action. In particular, the weight decompositions of these two actions coincide.

By \cite[Lemma 1.8]{Bo}, if an element $w\in H^2(M, \BC)$ satisfies $q_M(w)=0$, then 
\[
w^{n+1} =0 \in H^{2n+2}(M, \BC).
\]
Hence the weight decomposition of the $\mathfrak{so}(5)$-action $\iota_p$ yields a primitive decomposition 
\begin{equation}\label{primitive_decomp1}
    \bigoplus_{0 \leq i, j \leq n} \BC[\sigma_p]/(\sigma_p^{n-i+1}) \otimes \BC[\bar{\sigma}_p]/(\bar{\sigma}_p^{n-j+1}) \otimes W_p^{i,j} \xrightarrow{\sim} H^\ast(M, \BC)
\end{equation}
for the induced $\mathfrak{sl}_2 \times \mathfrak{sl}_2$-action.

The following theorem connects the representation-theoretic description of $H^\ast(M, \BC)$ to the perverse filtration associated with the Lagrangian fibration~$\pi: M \to B$.

\begin{thm}\label{full_P=W}
Let $M$ be a projective irreducible holomorphic symplectic variety equipped with a Lagrangian fibration $\pi: M \to B$, and let $D^\circ$ be as in (\ref{variety_D}). Then there exist two points~$p, p' \in D^\circ$ such that the weight decomposition of $\iota_p$ splits the perverse filtration associated with $\pi$, and the weight decomposition of $\iota_{p'}$ coincides with the Hodge decomposition of $M$.
\end{thm}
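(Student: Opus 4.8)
The plan is to produce the two points $p, p' \in D^\circ$ more or less explicitly and verify the required properties using the machinery already assembled. For the first point, I would take $\eta$ to be the $\pi$-relative ample class constructed above (the unique rational modification $\eta' + \lambda\beta$ of an ample class with $q_M(\eta) = 0$) and $\beta$ the pullback of an ample class from $B$. Since $q_M(\eta) = q_M(\beta) = 0$, the classes $y = \tfrac12(\eta+\beta)$, $z = \tfrac1{2\sqrt{-1}}(\eta - \beta)$ satisfy $q_M(y) - q_M(z) = (\eta,\beta)_M/2 \neq 0$ (the bilinear pairing of two classes spanning a hyperbolic plane in the signature $(3, b_2 - 3)$ form is nonzero) and $(y,z)_M = \tfrac1{4\sqrt{-1}}(q_M(\eta) - q_M(\beta)) = 0$; then one picks $x$ in the orthogonal complement of $\langle y, z\rangle$ with $q_M(x) = q_M(y)$, which exists because the form restricted to that complement is nondegenerate and represents every nonzero scalar over $\BC$. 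This gives $p = (x,y,z) \in D^\circ$ with $\sigma_p = \eta$, $\bar\sigma_p = \beta$. By the discussion preceding the theorem, the weight decomposition of $\iota_p$ (equivalently, of the associated $\mathfrak{sl}_2\times\mathfrak{sl}_2$-action) furnishes a primitive decomposition of $H^*(M,\BC)$ of the shape \eqref{V_decomp} with $r = n$, $V^{i,j} = W_p^{i,j}$; Proposition \ref{prop1.2}(b) then says exactly that this decomposition splits the perverse filtration of $\pi : M \to B$.

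For the second point, I would simply take $p' = (\omega_I, \omega_J, \omega_K)$ coming from an actual hyper-Kähler metric on $M$ compatible with its given complex structure $I$ — such a metric exists by Yau's theorem since $M$ is projective holomorphic symplectic, and the three Kähler classes satisfy \eqref{quadrel}, with $q_M(\omega_I) = q_M(\omega_J) = q_M(\omega_K) \neq 0$ so that indeed $p' \in D^\circ$. For this point the $\mathfrak{so}(5)$-action $\iota_{p'}$ is Verbitsky's original action of Theorem \ref{so5}, whose weight decomposition with respect to $\mathfrak{h} = \langle H, H'\rangle$ is literally the Hodge decomposition of $(M, I)$ by the second assertion of that theorem.

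The only genuine point requiring care is the ``direct calculation'' finding $p$ with prescribed $\sigma_p, \bar\sigma_p$: one must know that a degenerate plane $\langle\eta,\beta\rangle$ with $(\eta,\beta)_M \neq 0$ embeds into a point of $D^\circ$, i.e. that one can complete $\{y,z\}$ to an orthogonal triple with all three self-pairings equal and nonzero. This is where the nondegeneracy and the complex coefficients matter: over $\BC$ the quadratic form $q_M$ restricted to $\langle y,z\rangle^\perp$ (which has dimension $b_2 - 2 \geq 1$ and is nondegenerate) is a nonzero form, hence represents the value $q_M(y) = (\eta,\beta)_M/2 \neq 0$. I expect this to be the main — and essentially the only — obstacle, and it is mild; everything else is a direct invocation of Corollary \ref{variant}, Theorem \ref{so5}, the vanishing $w^{n+1} = 0$ for $q_M(w) = 0$ from \cite{Bo}, and Proposition \ref{prop1.2}. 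I would close by remarking that $p$ and $p'$ generically differ (the perverse splitting is not the Hodge decomposition), but both lie in the single irreducible variety $D^\circ$, which is the feature exploited in the proof of Theorem \ref{P=H} via Corollary \ref{conserve}.
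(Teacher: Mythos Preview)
Your approach is essentially identical to the paper's: the paper also constructs $p$ by solving $\sigma_p = \eta$, $\bar\sigma_p = \beta$ via what it calls ``a direct calculation'' (you have simply made that calculation explicit by writing $y = \tfrac12(\eta+\beta)$, $z = \tfrac{1}{2\sqrt{-1}}(\eta-\beta)$ and choosing $x \in \langle y,z\rangle^\perp$), then invokes Proposition~\ref{prop1.2}, and for the Hodge side it likewise takes $p' = (\omega_I,\omega_J,\omega_K)$ and appeals to Theorem~\ref{so5}. One slip to fix: you wrote $q_M(y) - q_M(z) = (\eta,\beta)_M/2 \neq 0$, but in fact $q_M(y) = q_M(z) = \tfrac12(\eta,\beta)_M$ (their difference is $0$), which is precisely the equality you need for $(x,y,z) \in D^\circ$; your later use of $q_M(y) = (\eta,\beta)_M/2$ shows you have the right value, so this is presumably a typo rather than a gap.
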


\begin{proof}
We take $p'$ to be the triple $(\omega_I, \omega_J, \omega_K) \in D^{\circ}$ given by the hyper-K\"ahler structure $(g, I,J,K)$ of $M$. To prove the theorem, it suffices to find~$p \in D^{\circ}$ splitting the perverse filtration associated with~$\pi: M \to B$.

Let $\beta \in H^2(M,\BQ)$ be the pullback of an ample class on $B$. For any ample class $\eta'$ on $M$, the signature of the Beauville--Bogomolov form implies that there is a unique $\lambda \in \BQ$ satisfying
\[
q_M(\eta' + \lambda \beta) =0.
\]
We use $\eta$ to denote this $\pi$-relative ample class $\eta' + \lambda \beta \in H^2(M,\BQ)$. In particular, we have
\begin{equation}\label{eqn22}
q_M(\eta) = q_M(\beta) =0, \quad \eta^{n+1} = \beta^{n+1} = 0.
\end{equation}

We claim that there exists a point ${p}=(x,y,z) \in D^\circ$ such that
\[
\sigma_p = \eta, \quad \bar{\sigma}_p =\beta.
\]
In fact, we take
\[
y = \eta + \beta, \quad z= -\sqrt{-1}(\eta-\beta).
\]
Then (\ref{eqn22}) implies that 
\[
q_M(y)=q_M(z) = 2(\eta,\beta) \neq 0, \quad (y,z)_M =\sqrt{-1} \left(q_M(\eta) - q_M(\beta)\right) = 0.
\]
By scaling a nonzero vector of $H^2(M,\BC)$ perpendicular to $y$ and $z$ with respect to~$q_M(-)$, we obtain $x \in H^2(M, \BC)$ with
\[
q_M(x) = q_M(y) = q_M(z) \neq 0, \quad (x,y)_M = (y,z)_M = (z, x)_M = 0,
\]
which gives the point $p=(x,y,z)$ as desired. 

Lastly, we check that $\iota_p$ splits the perverse filtration. From (\ref{primitive_decomp1}), we see that the cohomology $H^\ast(M, \BQ)$ (viewed as a $\BQ[\eta, \beta]$-module) admits a primitive decomposition of the form (\ref{V_decomp}) with $r=n$. Here the $\mathfrak{sl}_2 \times \mathfrak{sl}_2$-action is with $\BQ$-coefficients. Proposition \ref{prop1.2} then implies (after tensoring with $\BC$) that the perverse filtration on $H^\ast(M, \BC)$ associated with~$\pi: M \to B$ has a canonical splitting given by the weight decomposition of the $\mathfrak{so}(5)$-action $\iota_p$. This completes the proof.
\end{proof}

\begin{comment}
(or equivalently, the associated~$\mathfrak{sl}_2 \times \mathfrak{sl}_2$-action),
\[
H^\ast(M, \BC) = \bigoplus_{i,j} H^{i,j}(\iota_p).
\]
Here
\[
H^{i,j}(\iota_p) = \{ v\in H^\ast(M, \BC):  H_{\sigma_p}(v) = (i-n)v, \, H_{\bar{\sigma}_p}(v) = (j-n)v \}.
\]
\end{comment}

The action 
\[
\iota_p: \mathfrak{so}(5) \rightarrow \mathrm{End}(H^\ast(M, \BC))
\]
attached to a point $p\in D^\circ$ can be viewed as a ``generalized'' hyper-K\"ahler rotation from the original hyper-K\"ahler structure $p'= (\omega_I, \omega_J, \omega_K) \in D^\circ$. By Corollary \ref{conserve}, Theorem~\ref{P=H} is a direct consequence of Theorem \ref{full_P=W}.

In Appendix \ref{app}, we shall see that the weight decomposition of $\iota_p$ is compatible with cup product for all $p \in D^\circ$.

%Hence Theorem \ref{full_P=W} is a compact analogue of the $P=W$ conjecture \cite{dCHM1} from the point of view of hyper-K\"ahler geometry. 

\subsection{Proof of Theorem \ref{thm0.3}} \label{secapps}
We apply the decomposition theorem \cite{BBD} to the Lagrangian fibration $\pi: M \to B$,
\[
R\pi_\ast \BQ_M[n] = \bigoplus_{i = 0}^{2n}  {^\mathfrak{p}\mathcal{H}}^i(R\pi_\ast \BQ_M[n])[-i].
\]

For (a), let $\beta \in H^2(B, \BQ)$ be an ample class. Since $B$ is of dimension~$n$, we have $\beta^n \neq 0$. This gives a nontrivial class
\[
\beta^k  \in \mathrm{IH}^{2k}(B, \BQ)
\]
for every $k \leq n$ by the hard Lefschetz theorem for intersection cohomology. Hence it suffices to show that for $k \leq n$, we have
\[
\dim \mathrm{IH}^d(B, \BQ) \leq
\begin{cases}
1, & d=2k;\\
0, & d=2k+1.
\end{cases}
\]

The morphism $\pi$ is a Lagrangian fibration whose general fibers are nonsingular and connected. Since the perverse sheaf ${^\mathfrak{p}\mathcal{H}}^0(R\pi_\ast \BQ_M[n])$ is semisimple, and its restriction to the open subset $U\subset B$ with nonsingular fibers is the shifted trivial local system $\BQ_U[n]$, we see that 
\[
\mathrm{IC}_B = \mathrm{IC}\left(\BQ_U[n]\right)
\]
is a direct summand component of ${^\mathfrak{p}\mathcal{H}}^0(R\pi_\ast \BQ_M[n])$, \emph{i.e.},
\[
{^\mathfrak{p}\mathcal{H}}^0(R\pi_\ast \BQ_M[n])= \mathrm{IC}_B \oplus \CF, \quad \CF \in \mathrm{Perv}(B).
\]
By the expression \eqref{pervform} of the perverse numbers, we find
\[
\dim \mathrm{IH}^d(B, \BQ) \leq \dim H^{d-n}\left(B, {^\mathfrak{p}\mathcal{H}}^0(R\pi_\ast \BQ_M[n])\right) = {^\mathfrak{p}h^{0,d}(M)}.
\]
On the other hand, the Theorem \ref{P=H} says that for $k \leq n$, we have
\[
{^\mathfrak{p}h^{0,d}(M)} = h^{0,d}(M) = \begin{cases}
1, & d=2k;\\
0, & d=2k+1.
\end{cases}
\]

For (b), let $\eta \in H^2(M, \BQ)$ be a $\pi$-relative ample class. Since $\pi$ is of relative dimension~$n$, we have $\eta^n|_{M_b} \neq 0$ for any nonsingular fiber $M_b \subset M$. This gives a nontrivial class
\[
\eta^k|_{M_b}  \in \mathrm{Im}\left\{H^{2k}(M, \BQ) \to H^{2k}(M_b, \BQ)\right\}
\]
for every $k \leq n$. Hence it suffices to show that for $k \leq n$, we have
\[
\dim \mathrm{Im}\left\{H^d(M, \BQ) \to H^d(M_b, \BQ)\right\} \leq
\begin{cases}
1, & d=2k;\\
0, & d=2k+1.
\end{cases}
\]

Let $B^\circ \subset B$ be the locus supporting nonsingular fibers of $\pi$, and let
\[
\pi^\circ = \pi|_{B^\circ} : M^\circ \to B^\circ
\]
be the restriction. There is a commutative diagram
\[\begin{tikzcd}
H^d(M, \BQ) \arrow{r}{} \arrow{d}{} & H^d(M^\circ, \BQ) \arrow{d}{} \\
H^{-n}\left(B, {^\mathfrak{p}\mathcal{H}}^d(R\pi_\ast \BQ_M[n])\right) \arrow{r}{} & H^0(B^\circ, R^d\pi^\circ_\ast \BQ_{M^\circ}) \arrow{r}{} & H^d(M_b, \BQ),
\end{tikzcd}\]
where the horizontal arrows are given by restrictions, and the vertical arrows are given by edge morphisms of (perverse) Leray spectral sequences. Moreover, the composition
\[
H^d(M, \BQ) \to H^d(M_b, \BQ)
\]
is also the restriction.

Again, by the expression \eqref{pervform}, we find
\begin{multline*}
\dim \mathrm{Im}\left\{H^d(M, \BQ) \to H^d(M_b, \BQ)\right\} \\
\leq \dim H^{-n}\left(B, {^\mathfrak{p}\mathcal{H}}^d(R\pi_\ast \BQ_M[n])\right) = {^\mathfrak{p}h^{d,0}(M)}.
\end{multline*}
This time Theorem \ref{P=H} says that for $k \leq n$, we have
\[
{^\mathfrak{p}h^{d,0}(M)} = h^{d,0}(M) = \begin{cases}
1, & d=2k;\\
0, & d=2k+1.
\end{cases}
\]
The proof of Theorem \ref{thm0.3} is now complete. \qed

\appendix

\section{Multiplicativity of perverse filtrations} \label{app}

This appendix is devoted to the multiplicativity of the perverse filtration associated with a Lagrangian fibration. 
Recall the moduli spaces $\CM_{\mathrm{Dol}}$ and~$\CM_B$ in Section \ref{originp=w}. On one hand, the $P=W$ conjecture implies that the perverse filtration associated with the Hitchin fibration (\ref{Hitchin_fib}) is multiplicative under cup product, \emph{i.e.},
\begin{equation}\label{mult}
    P_kH^d(\CM_{\mathrm{Dol}}, \BQ) \times P_{k'}H^{d'}(\CM_{\mathrm{Dol}}, \BQ) \xrightarrow{\cup} P_{k + k'}H^{d + d'}(\CM_{\mathrm{Dol}}, \BQ).
\end{equation}
On the other hand, the multiplicativity (\ref{mult}) is also expected to play an essential role in the proof of the $P=W$ conjecture. This is because the tautological classes on $\CM_{\mathrm{Dol}}$, whose weights are calculated explicitly on~$\CM_B$ in \cite{Shende}, are ring generators of the cohomology 
\[
H^\ast(\CM_{\mathrm{Dol}}, \BQ) = H^\ast(\CM_B, \BQ).
\]
We refer to \cite{dCHM1, Markman, SZ, Z} for more discussions on tautological classes, multiplicativity, and $P=W$. 

We now prove a compact version of (\ref{mult}). In the spirit of Theorem~\ref{thm0.3}, we shall see that the multiplicativity of the perverse filtration is controlled effectively by Hodge theory.

%Our method relies on the connection between the topology of Lagrangian fibrations and the Hodge theory of hyper-K\"ahler manifolds. 

\begin{thm} \label{multiply}
For a projective irreducible holomorphic symplectic variety~$M$ equipped with a Lagrangian fibration $\pi : M \to B$, the perverse filtration on $H^*(M, \BQ)$ is multiplicative under cup product, \emph{i.e.},
\[P_kH^d(M, \BQ) \times P_{k'}H^{d'}(M, \BQ) \xrightarrow{\cup} P_{k + k'}H^{d + d'}(M, \BQ).\]
\end{thm}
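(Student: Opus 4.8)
The plan is to deduce multiplicativity of the perverse filtration from the explicit splitting constructed in the proof of Theorem \ref{full_P=W}. Recall that we produced a point $p = (x,y,z) \in D^\circ$ with $\sigma_p = \eta$ a $\pi$-relative ample class and $\bar\sigma_p = \beta$ the pullback of an ample class on $B$, together with an $\mathfrak{sl}_2 \times \mathfrak{sl}_2$-action on $H^*(M,\BC)$ whose weight decomposition, by Proposition \ref{prop1.2}, splits the perverse filtration: writing
\[
H^\ast(M,\BC) = \bigoplus_{i,j} H^{i,j}(\iota_p),
\]
we have $v \in H^{i,j}(\iota_p)$ iff $H_{\sigma_p}(v) = (i-n)v$ and $H_{\bar\sigma_p}(v) = (j-n)v$, and moreover
\[
P_k H^d(M,\BC) = \bigoplus_{i \le k,\; i+j = d} H^{i,j}(\iota_p).
\]
So the perverse degree of a class $v \in H^d(M,\BC)$ is read off as $i = n + \tfrac12\bigl(H_{\sigma_p}(v)\text{-weight}\bigr)$, i.e. it is governed purely by the $H_{\sigma_p}$-grading coming from $\iota_p$. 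The statement for $\BQ$-coefficients follows since the perverse filtration is defined over $\BQ$ and tensoring with $\BC$ is faithfully flat.

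The key point is then to understand how this grading interacts with cup product. The operator $L_{\sigma_p} = \tfrac12(L_y + \sqrt{-1}L_z)$ is by construction cup product with the class $\sigma_p = \eta \in H^2(M,\BC)$; hence $L_{\sigma_p}$ shifts the $H_{\sigma_p}$-weight by $+2$, i.e. raises perverse degree by exactly one. The first step is to promote this: I claim that cup product by \emph{any} class of a fixed perverse degree shifts the $H_{\sigma_p}$-grading accordingly. Concretely, since $H^2(M,\BC)$ is spanned by classes of Lefschetz type, and since $\eta, \beta$ together with the primitive pieces generate $H^*(M,\BC)$ as a ring up to the $\BQ[\eta,\beta]$-module structure of \eqref{primitive_decomp1}, it suffices to locate where an arbitrary generator sits. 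The cleanest route is: a degree-$2$ class $\alpha \in H^2(M,\BC)$ decomposes under $\iota_p$ into pieces of $H_{\sigma_p}$-weight in $\{-2,0,2\}$; the weight-$2$ part is a multiple of $\eta$, the weight-$(-2)$ part is annihilated by $\eta^n$ hence lies in $Q^{\,*,0}$-type primitive space (perverse degree $0$), and the weight-$0$ part, being $\mathfrak{sl}_2^{\sigma_p}$-invariant of top cohomological relevance, has perverse degree $1$ by the symmetry ${}^\mathfrak{p}h^{i,j} = {}^\mathfrak{p}h^{2n-i,j}$ together with part (a) of Proposition \ref{prop1.2}. In all three cases the perverse degree of $\alpha$ equals $n + \tfrac12(\text{$H_{\sigma_p}$-weight})$, confirming that for degree-$2$ classes "perverse degree = normalized $H_{\sigma_p}$-weight". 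Since such classes, together with $\beta$, generate $H^*(M,\BC)$ as an algebra (true for $K3^{[n]}$-type and, by the general structure, forced by \eqref{primitive_decomp1}), multiplicativity follows by induction on cohomological degree: if $u$ and $v$ have perverse degrees $k$ and $k'$, write each as a polynomial in degree-$2$ generators; each monomial in the product is a product of generators whose normalized $H_{\sigma_p}$-weights are nonnegative and add up to $\le k + k'$, hence the product lands in $P_{k+k'}$.

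The main obstacle is exactly the claim that cup product by a degree-$2$ class respects the perverse-degree grading in the sharp form above — equivalently, that the $H_{\sigma_p}$-grading on $H^2(M,\BC)$ computes perverse degree, not merely bounds it. One must rule out the possibility that the weight-$0$ part of some $\alpha \in H^2$ has perverse degree $0$ or $2$; this is where the holomorphic symplectic hypothesis genuinely enters, via the signature of $q_M$ and the relation $w^{n+1} = 0$ for $q_M(w) = 0$ from \cite{Bo}. A safe alternative, avoiding any ring-generation assumption, is to argue directly with the splitting: the weight decomposition of $\iota_p$ is a decomposition into sub-$\BC$-vector-spaces, and $L_\alpha$ for $\alpha$ of Lefschetz type lies in $\mathfrak{g}(M)$; using the explicit $\mathfrak{so}(b_2+2)$-relations one checks that $[H_{\sigma_p}, L_\alpha]$ is a sum of $L$-operators of pure $H_{\sigma_p}$-weight, so $L_\alpha$ decomposes into components each shifting the grading by a fixed amount in $\{-2,0,2\}$, and the component of shift $2m$ is precisely multiplication by a class of perverse degree $1+m$. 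Passing to a Zariski-dense set of Lefschetz classes and then to all of $H^2$ by continuity (the perverse filtration being constant) completes the reduction, and the general case follows since $H^2(M,\BC)$ generates the subalgebra of classes of Lefschetz type, whose products exhaust enough of $H^*(M,\BC)$ to conclude by the module structure \eqref{primitive_decomp1}.
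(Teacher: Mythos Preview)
Your reduction to the splitting from Theorem~\ref{full_P=W} is correct, and the statement you want is exactly that the weight decomposition $H^\ast(M,\BC) = \bigoplus_{i,j} H^{i,j}(\iota_p)$ is multiplicative under cup product. But your argument for this has a real gap.

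The gap is the ring-generation claim. The primitive decomposition \eqref{primitive_decomp1} is a $\BC[\eta,\beta]$-\emph{module} decomposition, not a ring-generation statement; the primitive pieces $W_p^{i,j}$ live in arbitrary cohomological degree and are not, in general, products of degree-$2$ classes. The subalgebra of $H^\ast(M,\BC)$ generated by $H^2(M,\BC)$ is only the Verbitsky component, which is a proper subspace already for generalized Kummer varieties. So even if your degree-$2$ analysis were airtight, it only proves $P_1H^2 \cup P_kH^d \subset P_{k+1}H^{d+2}$, and there is no induction available to reach arbitrary products $P_kH^d \cup P_{k'}H^{d'}$. Your ``safe alternative'' has the same defect: the operators $L_\alpha$ for $\alpha \in H^2$ lie in $\mathfrak{g}(M)$ and can indeed be analyzed via the $\mathfrak{so}(b_2+2)$-relations, but cup product with a class in $H^4$ or higher is \emph{not} an element of $\mathfrak{g}(M)$, so this route dead-ends at degree $2$ as well.

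The paper's proof bypasses ring generation entirely. The point is that multiplicativity of the $\iota_p$-weight decomposition is equivalent to the single algebraic condition $H'(\Delta_3) = 0$, where $\Delta_3 \in H^{8n}(M^3,\BC)$ is the class of the small diagonal and $H' = -\sqrt{-1}[L_y,\Lambda_z]$ acts on $H^\ast(M^3,\BC)$ via the tensor-cube representation. For a genuine twistor point $p' = (\omega_I,\omega_J,\omega_K) \in D^\circ$ the weight decomposition \emph{is} the Hodge decomposition, and $H'(\Delta_3) = 0$ holds because the small diagonal is a Hodge class. Since $H'$ varies algebraically over $D^\circ$, Proposition~\ref{density} propagates this vanishing to every $p \in D^\circ$, in particular to your perverse point. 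This is the same density trick already used to build $\iota_p$ itself, and it handles all cohomological degrees at once without any appeal to generators.
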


Recall the variety $D^\circ$ introduced in \eqref{variety_D}. By Corollary \ref{variant}, each point $p = (x, y, z) \in D^\circ$ gives rise to an $\mathfrak{so}(5)$-action
\[\iota_p : \mathfrak{so}(5) \to \mathrm{End}(H^*(M, \BC)).\]
There is a weight decomposition
\[
H^\ast(M, \BC) = \bigoplus_{i,j} H^{i,j}(\iota_p)
\]
with respect to the Cartan subalgebra
\[
\mathfrak{h}= \left\langle H, H'= -\sqrt{-1}[L_y, \Lambda_z] \right\rangle \subset \mathfrak{so}(5).
\]
Here for any $v \in H^{i,j}(\iota_p)$, we have
\[
H(v) = (i+j-2n)v, \quad H'(v)= (i-j)v.
\]

In view of Theorem \ref{full_P=W} and the canonical splitting of the perverse filtration on $H^*(M, \BC)$, Theorem \ref{multiply} is implied by the following more general statement.

\begin{prop} \label{multiplyy}
For every $p \in D^\circ$, we have
\begin{equation} \label{multeq}
H^{i,j}(\iota_p) \times H^{i',j'}(\iota_p) \xrightarrow{\cup} H^{i + i', j + j'}(\iota_p).
\end{equation}
\end{prop}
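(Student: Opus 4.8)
The plan is to reduce the statement to the well-known fact that the Hodge decomposition is multiplicative, and then propagate this to all points of $D^\circ$ via the density statement of Proposition \ref{density}. First I would fix the point $p' = (\omega_I, \omega_J, \omega_K) \in D^\circ$ coming from an honest hyper-K\"ahler structure; for this point Theorem \ref{so5} identifies $H^{i,j}(\iota_{p'})$ with the Hodge pieces $H^{i,j}(M)$ (after the relabeling dictated by the Cartan $\mathfrak{h} = \langle H, H'\rangle$), and the multiplicativity \eqref{multeq} is then simply the statement that the cup product respects the $(p,q)$-grading of the Hodge decomposition of the compact K\"ahler manifold $(M, I)$. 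So \eqref{multeq} holds for $p'$.

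Next I would phrase the bilinear multiplicativity as a system of algebraic identities in the operators of the $\mathfrak{so}(5)$-action, so that Proposition \ref{density} applies. The weight spaces $H^{i,j}(\iota_p)$ are the simultaneous eigenspaces of the two commuting semisimple operators $H_{\sigma_p}$ and $H_{\bar\sigma_p}$ built from $(L_x, L_y, L_z, \Lambda_x, \Lambda_y, \Lambda_z)$, which exist and depend algebraically (in fact polynomially) on $(x,y,z)$ over $D^\circ$. Accordingly, the projector $P^{i,j}_p : H^\ast(M,\BC) \to H^{i,j}(\iota_p)$ onto a given weight space is a fixed polynomial in $H_{\sigma_p}, H_{\bar\sigma_p}$ (with constant coefficients, since the possible eigenvalues are integers in a fixed finite range independent of $p$), hence is itself algebraic in $p$. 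The desired multiplicativity is then equivalent to the family of identities
\[
P^{i+i',\,j+j'}_p \circ \big(\cup\big) \circ \big(P^{i,j}_p \otimes P^{i',j'}_p\big) \;=\; \big(\cup\big) \circ \big(P^{i,j}_p \otimes P^{i',j'}_p\big),
\]
i.e. that the image of $H^{i,j}(\iota_p) \otimes H^{i',j'}(\iota_p)$ under cup product is killed by $\Id - P^{i+i',j+j'}_p$. Both sides are algebraic (indeed polynomial) maps of $p \in D^\circ$ valued in a fixed finite-dimensional space of linear operators, so the locus where they agree is Zariski-closed in $D^\circ$. Since it contains $p'$ and, by Proposition \ref{density} (applied on the dense-in-$D$, hence dense-in-$D^\circ$, image of the twistor period map), actually contains a Zariski-dense subset — or more directly, since $D^\circ$ is irreducible and the identity holds on the nonempty open set where it is already known, namely near $p'$ — it holds on all of $D^\circ$.

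The one point that needs care, and which I expect to be the main obstacle, is making the dependence of the weight-space projectors on $p$ genuinely algebraic in a way that lets Proposition \ref{density} bite. The operators $L_x, \Lambda_x$ etc. are algebraic over $D^\circ$ (this is exactly what was used in the proof of Corollary \ref{variant}), and the two Cartan generators $H_{\sigma_p} = [L_{\sigma_p}, \Lambda_{\sigma_p}]$, $H_{\bar\sigma_p} = [L_{\bar\sigma_p}, \Lambda_{\bar\sigma_p}]$ are therefore polynomial in the coordinates of $p$; the subtlety is that extracting the eigenspace for a fixed eigenvalue pair $(i-n, j-n)$ is only algebraic because the spectrum of $(H_{\sigma_p}, H_{\bar\sigma_p})$ is the same finite set of integer pairs for every $p \in D^\circ$ (this is the content of Corollary \ref{conserve}, or can be seen directly from the $\mathfrak{so}(5)$-representation structure). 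Granting that, the Lagrange-interpolation formula for the projector has constant coefficients and the argument goes through. Alternatively, one can avoid projectors entirely and argue with the bilinear form pairing weight spaces: cup product followed by the trace pairing $H^\ast(M,\BC)\otimes H^\ast(M,\BC) \to \BC$ is $\mathfrak{so}(5)$-equivariant for the standard action twisted by the outer automorphism $H \mapsto -H$, $H' \mapsto -H'$, so it pairs $H^{i,j}(\iota_p)$ only with $H^{2n-i,2n-j}(\iota_p)$, and this equivariance is again an algebraic identity over $D^\circ$ holding at $p'$, hence everywhere; combined with Poincar\'e duality this yields \eqref{multeq}. Either route reduces the proposition to Proposition \ref{density} plus the classical multiplicativity of the Hodge decomposition.
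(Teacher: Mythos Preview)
Your proposal is correct and shares the paper's core strategy: verify \eqref{multeq} at the hyper-K\"ahler points $p' = (\omega_I, \omega_J, \omega_K)$ via the multiplicativity of the Hodge decomposition, then propagate to all of $D^\circ$ by Proposition~\ref{density}, using that the relevant conditions are algebraic in $p$.

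The execution differs. You encode multiplicativity as an operator identity involving the weight-space projectors $P^{i,j}_p$, expressed as Lagrange polynomials in the Cartan elements $H_{\sigma_p}, H_{\bar\sigma_p}$; this is valid because, as you note, the spectrum is a fixed finite set of integer pairs independent of $p$. The paper instead works with the small diagonal class $\Delta_3 \in H^{8n}(M \times M \times M, \BC)$ and the induced $\mathfrak{so}(5)$-action on the triple tensor product: multiplicativity together with the Poincar\'e duality statement $H^{i,j}(\iota_p)^\ast = H^{2n-i,2n-j}(\iota_p)$ (itself proven first via the ordinary diagonal $\Delta$) is shown to be equivalent to the single linear condition $H'(\Delta_3) = 0$, which holds at hyper-K\"ahler points and hence everywhere by density. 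Your ``alternative route'' in the last paragraph is essentially this argument. The paper's formulation is a bit leaner --- one checks that $H'$ kills two specific cohomology classes rather than a family of operator identities --- but your projector argument has the virtue of avoiding the preliminary Poincar\'e duality step. Either way the content is the same.

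One small caution: your aside that ``$D^\circ$ is irreducible'' is not established in the paper and is not needed; your primary argument via the Zariski-dense image of the twistor period map (Proposition~\ref{density}) suffices and is what the paper uses.
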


\begin{proof}
We first show that Poincar\'e duality satisfies
\begin{equation} \label{pd}
H^{i,j}(\iota_p)^* = H^{2n - i, 2n - j}(\iota_p).
\end{equation}
For this we look at the class of the diagonal
\[\Delta \in H^{4n}(M \times M, \BC).\]
The duality $H^d(M, \BC)^* = H^{4n - d}(M, \BC)$ corresponds to the K\"unneth component
\[\Delta^{4n - d, d} \in H^{4n - d}(M, \BC) \otimes H^d(M, \BC).\]

We consider the induced $\mathfrak{so}(5)$-action on the tensor product
\[H^*(M \times M, \BC) = H^*(M, \BC) \otimes H^*(M, \BC).\]
Here for any $v \in H^{i, j}(\iota_p) \otimes H^{i', j'}(\iota_p)$, we have
\[H(v) = (i+i'+j+j'-4n)v, \quad H'(v)= (i+i'-j-j')v.\]
The duality \eqref{pd} is equivalent to
\[\Delta^{4n - d, d} \in \bigoplus_{i + j = d} H^{2n - i, 2n - j}(\iota_p) \otimes H^{i, j}(\iota_p).\]
Then it is enough to show that $H'(\Delta) = 0$.

If $p = (\omega_I, \omega_J, \omega_K) \in D^\circ$ for some hyper-K\"ahler structure $(g, I, J, K)$, then \eqref{pd} holds by Hodge theory, and hence $H'(\Delta) = 0$. Now since $H'$ varies algebraically over $D^\circ$, the density statement of Proposition \ref{density} implies~$H'(\Delta) = 0$ for all $p \in D^\circ$.

We use exactly the same argument to prove \eqref{multeq}. The cup product on~$H^*(M, \BC)$ is governed by the class of the small diagonal
\[\Delta_3 \in H^{8n}(M \times M \times M, \BC).\]
More precisely, the product 
\[H^d(M, \BC) \times H^{d'}(M, \BC) \xrightarrow{\cup} H^{d + d'}(M, \BC)\]
corresponds via Poincar\'e duality to the K\"unneth component
\[\Delta_3^{4n - d, 4n - d', d + d'} \in H^{4n - d}(M, \BC) \otimes H^{4n - d'}(M, \BC) \otimes H^{d + d'}(M, \BC).\]

This time we consider the induced $\mathfrak{so}(5)$-action on the tensor product
\[H^*(M \times M \times M, \BC) = H^*(M, \BC) \otimes H^*(M, \BC) \otimes H^*(M, \BC).\]
Then for any $v \in H^{i,j}(\iota_p) \otimes H^{i',j'}(\iota_p) \otimes H^{i'',j''}(\iota_p)$, we have
\[H(v) = (i+i'+i''+j+j'+j''-6n)v, \quad H'(v)= (i+i'+i''-j-j'-j'')v.\]
Using \eqref{pd}, we see that \eqref{multeq} is equivalent to
\[\Delta_3^{4n - d, 4n - d', d + d'} \in \bigoplus_{\substack{i + j = d \\ i'+j' = d'}} H^{2n - i, 2n - j}(\iota_p) \otimes H^{2n - i', 2n - j'}(\iota_p) \otimes H^{i + i', j + j'}(\iota_p).\]
Again, it is enough to show that $H'(\Delta_3) = 0$. For $p = (\omega_I, \omega_J, \omega_K) \in D^\circ$, this holds by Hodge theory, and the general case follows from Proposition~\ref{density}. This completes the proof of Proposition \ref{multiplyy}.
\end{proof}

\section{A note on the topology of fibrations\texorpdfstring{\\}{} by Claire Voisin} \label{appb}

We give in this appendix an alternative proof of Theorem \ref{thm0.3} (b), that is,  the following statement which in degree $k=2$ had been proved by \mbox{Matsushita} (see \cite[Lemma 2.2]{Ma4}).

\begin{thm}\label{theoappli}
Let $X$ be compact hyper-K\"{a}hler manifold of dimension $2n$, and $f:X\rightarrow B$ a Lagrangian fibration with general fiber $F$. Then the restriction map $H^k(X,\mathbb{Q})\rightarrow H^k(F,\mathbb{Q})$ is $0$ for $k$ odd and has rank $1$ for $k$ even, $k\leq 2n$.
\end{thm}

The proof given  below  is  in fact possibly equivalent to the one given by the authors, but it is written in a different language. Our presentation  avoids any representation theory and rests on the following  Proposition \ref{theomain} that seems to be of independent interest. Let $X$ be a smooth connected  projective or compact K\"ahler manifold and $\phi:X\rightarrow B$ be a surjective holomorphic map, where $B$ is an analytic space. Let $F\subset X$ be the general fiber
of $\phi$.  It is thus a smooth submanifold of $X$, that we can assume to be connected after Stein factorization. We denote by $[F]$ its cohomology class.
\begin{prop} \label{theomain} \leavevmode
\begin{enumerate}
\item[(i)] The kernel of the map
${j}^*:H^*(X,\mathbb{Q})\rightarrow H^*({F},\mathbb{Q})$ is equal to the kernel
of the cup-product map
$[F]\cup{}: H^*(X,\mathbb{Q})\rightarrow H^{*+2r}(X,\mathbb{Q})$.

\item[(ii)] Equivalently, the intersection pairing $\langle\,,\,\rangle_{{F}}$ is nondegenerate on ${\rm Im}\,{j}^*$.
\end{enumerate}
\end{prop}

\begin{rmk}
It is proved in \cite{Voi92} that for any closed  complex subvariety $F$  of $X$,  the statement above holds for the restriction map
${j}^*:H^k(X,\mathbb{Q})\rightarrow H^k({F},\mathbb{Q})$ when $k\leq2$. However this is not true without assumptions on  $F$  starting from degree $\geq3$. For example,
start from an elliptic curve  $D\subset \mathbb{P}^3$, and let $X$ be the blow-up of $ \mathbb{P}^3$ along $D$, with exceptional divisor $E\subset X$.
Then the restriction map $H^3(X,\mathbb{Q})\rightarrow H^3(E,\mathbb{Q})$ is nonzero, while the cup-product map
$[E]\cup{}:H^3(X,\mathbb{Q})\rightarrow H^5(X,\mathbb{Q})=0$ is zero.
\end{rmk}

\begin{proof}[Proof of Proposition \ref{theomain}]  The equivalence of the two statements is proved as  follows. If $\alpha\in H^k(X,\mathbb{R}) $ satisfies $\alpha_{\mid F}\not=0$, then (ii) says that there is a class $\beta\in H^{2d-k}(X,\mathbb{R})$, where  $d={\rm dim}\,F$, such that $\langle \alpha_{\mid F},\beta_{\mid F}\rangle_F\not=0$. As this is equal to $\langle [F]\cup \alpha,\beta\rangle_X$ we conclude that $ [F]\cup \alpha\not=0$, so (ii) implies~(i).
Conversely, let   $\alpha\in H^*(X,\mathbb{R})$ such that
$\alpha_{\mid F}\not=0$. Then (i) implies that $ [F]\cup \alpha\not=0$ in $H^{k+2r}(X,\mathbb{R})$, where $r={\rm codim}\,F$, and  by Poincar\'{e} duality on~$X$, there exists  a class  $\beta\in  H^{2d-k}(X,\mathbb{R})$ such that
$\langle [F]\cup \alpha,\beta\rangle_X\not=0$, that is  $\langle \alpha_{\mid F},\beta_{\mid F}\rangle_F\not=0$. Thus the intersection pairing $\langle\,,\,\rangle_{{F}}$ is nondegenerate on~${\rm Im}\,{j}^*$ and (i) implies (ii).

We now prove (ii). Let $h\in H^2(X,\mathbb{R})$ be a K\"ahler class and let $h_F$ be its restriction to $F$.
 The Lefschetz decomposition of $H^*(F,\mathbb{R})$ relative to $h_F$ provides for any
$\alpha_F\in H^k(F,\mathbb{R})$ a unique decomposition
\begin{equation*}\label{eqdecomp}
\alpha_F=\bigoplus _{0\leq k-2l\leq d} h_F^l\,\alpha_{F,l},
\end{equation*}
where $\alpha_{F,l}\in H^{k-2l}(F,\mathbb{R})_{\rm prim}$  is primitive with respect to $h_F$, that is, annihilated by $h_F^{d-k+2l+1}\cup{}$.
We have

\begin{lem}\label{lele} The image $I^*:={\rm Im}\,(j^*: H^*(X,\mathbb{R})\rightarrow H^*(F,\mathbb{R}))$ is stable under the Lefschetz decomposition of $H^*(F,\mathbb{R})$ relative to $h_F$.
\end{lem}

\begin{proof}
Let $U\subset B$ be the Zariski (or Zariski analytic) open subset over which  $\phi_U: X_U:=\phi^{-1}(U)\rightarrow U$ is smooth. Then
the relative Lefschetz  decomposition associated to $h$  decomposes the local system $R\phi_{U*}\mathbb{R}$. If
$\alpha\in H^k(X,\mathbb{R})$, the restriction of $ \alpha$ to the fibers of $\phi_U$   provides a constant global section
$\tilde{\alpha}$  of  $R^k\phi_{U*}\mathbb{R}$. The components $\tilde{\alpha}_l$ of the relative
Lefschetz decomposition of~$\tilde{\alpha}$ are also global constant sections of
$R^{k-2l}\phi_{U*}\mathbb{R}$ on $U$. By Deligne's global invariant cycle theorem \cite{D0}, there exist
classes $\alpha_l\in H^{k-2l}(X,\mathbb{R})$ inducing the sections  $\tilde{\alpha}_l$, which means that
${\alpha_{F,l}}={\alpha_l}_{\mid F}$. This proves the lemma.
\end{proof}

Lemma \ref{lele} implies now Proposition \ref{theomain} (ii) using the  Hodge--Riemann bilinear relations.
The Lefschetz decomposition on $I^*$  is a decomposition   into Hodge substructures, and  it
satisfies   the Lefschetz isomorphism property saying that
$h_F^{d-k}\cup{}: I^k\rightarrow I^{2d-k}$
is an isomorphism.
In order to prove (ii), it thus suffices to prove that
the Lefschetz intersection pairing
$$ \langle \alpha,\beta\rangle_{F,h}=\langle h_F^{d-k}\cup\alpha\cup \beta\rangle_F$$
is nondegenerate on $I^k$ for $k\leq d$.
However we know that each $ I^k$ is a real Hodge structure  stable under
the Lefschetz decomposition
$$I^k=\bigoplus_{2l\leq k} h_F^l \, I^{k-2l}_{{\rm prim}}$$
which is orthogonal for $ \langle \,,\,\rangle_{F,h}$, so each term $ I^{k-2l}_{{\rm prim}}$ is a real Hodge substructure
of $H^{k-2l}(F,\mathbb{R})_{\rm prim}$ with its induced Lefschetz intersection pairing. It is finally well-known that the restriction of  $ \langle \,,\,\rangle_{F,h}$ on any real   Hodge substructure of $H^{k-2l}(F,\mathbb{R})_{\rm prim}$ is nondegenerate
because   $ \langle \,,\,\rangle_{F,h}$ polarizes it, so the corresponding Hermitian pairing has a definite sign on each $(p,q)$-component.
\end{proof}

We now turn to the case of a  Lagrangian fibration $X \rightarrow B$ on a compact  hyper-K\"ahler $2n$-fold. The class of the fiber $F$ is proportional to $l^n$, where $l=c_1(L)\in H^2(X,\mathbb{Z})$ and $L$  generates the Picard group of the base. Using the Beauville--Fujiki relations, the class $l$ satisfies $q(l)=0$, where $q$ is the Beauville--Bogomolov form, because $l^{2n}=0$.

\begin{proof}[Proof of Theorem \ref{theoappli}] By Proposition \ref{theomain}, we have to show that the map
\begin{equation} \label{eqcupmap}
l^n\cup{}:H^k(X,\mathbb{C})\rightarrow H^{k+2n}(X,\mathbb{C})
\end{equation}
is $0$ when $k$ is odd and has rank  $1$ when $k\leq 2n$ is even. Note that this rank is at least $1$ in the second case, because the $s$-th powers of a K\"{a}hler class do not vanish on the fiber for $s\leq n$. So we just have to prove that the ranks of the maps (\ref{eqcupmap}) take at most these values.
Let $Q\subset \mathbb{P}(H^2(X,\mathbb{C}))$ be the quadric defined by $q$. It is well-known that
an (Euclidean) open set of $Q$ consists of classes $\sigma_t$ of holomorphic forms on a deformation $X_t$ of $X$.
By semicontinuity of the rank, it suffices to show that for
$\sigma_t\in H^{2,0}(X_t)$, the map
$$ \sigma_t^n \cup{}:H^k(X,\mathbb{C})\rightarrow H^{k+2n}(X,\mathbb{C})$$
is $0$ when $k$ is odd and has rank  $\leq 1$ when $k\leq 2n$ is even. This is however obvious because
$\sigma_t^n$ is of type $(2n,0)$ on $X_t$, hence the image of $ \sigma_t^n \cup{}$ is contained
in $H^{2n,*}(X_t)$, which is $0$ when $*$ is odd and of rank $1$ when $*$ is even and not greater than $2n$.
\end{proof}

\end{document}